\definecolor{box_color}{rgb}{.8,.8,.8}
\newtheorem{lemma}{Lemma}
\newtheorem{proposition}{Proposition}
\newtheorem{corollary}{Corollary}
\newtheorem{fact}{Fact}
\newtheorem{remark}{Remark}
\newtheorem{assumption}{Assumption}
\newtheorem{ass}{C.}
\newenvironment{proof}[1][Proof]{\begin{trivlist}
\item[\hskip \labelsep {\bfseries #1}]}{\end{trivlist}}
\def\begcen{\begin{center}}
\def\endcen{\end{center}}
\def\mx{\mathcal{X}}
\newcommand{\RE}{\mathbb {R}}    
\newcommand{\col}{ \mbox{col} }
\newcommand{\rank}{ \mbox{rank } }
\def\hal{{1 \over 2}}
\def\L2{{\cal L}_2}
\def\L2e{{\cal L}_{2e}}
\def\rea{\mathbb{R}}
\def\sign{\mbox{sign}}
\def\begequarr{\begin{eqnarray}}
\def\endequarr{\end{eqnarray}}
\def\begequarrs{\begin{eqnarray*}}
\def\endequarrs{\end{eqnarray*}}
\def\begarr{\begin{array}}
\def\endarr{\end{array}}
\def\begequ{\begin{equation}}
\def\endequ{\end{equation}}
\def\lab{\label}
\def\begdes{\begin{description}}
\def\enddes{\end{description}}
\def\begenu{\begin{enumerate}}
\def\begite{\begin{itemize}}
\def\endite{\end{itemize}}
\def\endenu{\end{enumerate}}
\def\lef[{\left[\begin{array}}
\def\rig]{\end{array}\right]}
\def\qed{\hfill$\Box \Box \Box$}
\def\begcen{\begin{center}}
\def\endcen{\end{center}}
\def\begrem{\begin{remark}\rm}
\def\endrem{\end{remark}}
\def\begassum{\begin{assumption}}
\def\endassum{\end{assumption}}
\def\begassums{\begin{assumption*}}
\def\endassums{\end{assumption*}}
\def\begassu{\begin{ass}}
\def\endassu{\end{ass}}
\def\beglem{\begin{lemma}}
\def\endlem{\end{lemma}}
\def\begcor{\begin{corollary}}
\def\endcor{\end{corollary}}
\def\begfac{\begin{fact}}
\def\endfac{\end{fact}}
\def\AUT{{\it Automatica}}
\def\CEP{{\it Control Engineering Practice}}
\begin{document}

\title{Global Stabilisation of Underactuated Mechanical Systems  via PID Passivity-Based Control} 
\author{{Jose Guadalupe Romero\thanks{J. G. Romero  is with  Departamento Acad\'emico  de Sistemas Digitales, ITAM, R\'io Hondo 1, 01080, Ciudad de M\'exico, M\'exico, e-mail: jose.romerovelazquez@itam.mx}} ,  {Alejandro Donaire
\thanks{Alejandro Donaire is with   PRISMA Lab, Universit\`a degli Studi di Napoli Federico II, Via Claudio 21, 80125, Naples, Italy,  e-mail: alejandro.donaire@unina.it}}\;  and   {Romeo Ortega
\thanks{R. Ortega is  with Laboratoire des Signaux et Syst\`emes, CNRS--SUPELEC, Plateau du Moulon, 91192
Gif--sur--Yvette, France, e-mail: ortega@lss.supelec.fr}}}
\date{}
\maketitle
\begin{abstract}
In this note we identify a class of underactuated mechanical systems whose desired constant equilibrium position can be globally stabilised with the ubiquitous PID controller.  The class is characterised via some easily verifiable conditions on the systems inertia matrix and potential energy function, which are satisfied by many benchmark examples. The design proceeds in two main steps, first, the definition of two new passive outputs whose weighted sum defines the signal around which the PID is added.  Second, the observation that it is possible to construct a Lyapunov function for the desired equilibrium via a suitable choice of the aforementioned weights and the PID gains and initial conditions. The results reported here follow the same research line as \cite{DONetal} and  \cite{ROMetal}---bridging the gap between the Hamiltonian and the Lagrangian formulations used, correspondingly, in these papers. Two additional improvements to our previous works are the removal of a non-robust cancellation of a potential energy term and the establishment of equilibrium attractivity under weaker assumptions.
\end{abstract}
%
\section{Introduction}
\lab{sec1}
%
A major breakthrough in robotics was the proof by Takegaki and Arimoto \cite{TAKARI} that, in spite of its highly complicated nonlinear dynamics, a simple PD law provides a global solution to the point-to-point positioning task for fully actuated robot manipulators. As shown in \cite{ORTSPO}  the key property underlying the success of such a simple scheme is the passivity of the system dynamics. Indeed, as is now well-known, mechanical systems define passive maps from the external forces to the generalized coordinate velocities, see \cite{KELORT} for an early reference. Invoking this property the derivative term of the aforementioned PD is assimilated to a constant feedback around the passive output, while the proportional one adds a quadratic term to the systems potential energy to assign a minimum at the desired equilibrium, making the total energy function a {\em bona fide} Lyapunov function. It should be mentioned that  this ``energy--shaping plus damping injection" construction proposed 34 years ago is still the basis of most developments in Passivity-Based Control (PBC)---a term coined in \cite{ORTSPO} to describe a controller design procedure where the control objective is achieved via passivation.

While fully actuated mechanical systems admit an arbitrary shaping of the potential energy by means of feedback, and therefore stabilization to any desired equilibrium via PD control, this is in general not possible for {\em underactuated} systems, that is, for systems where the number of degrees of freedom is strictly larger than the number of control variables. In certain cases this problem can be overcome by also modifying the kinetic energy of the system---as done, for instance in interconnection and damping assignment (IDA) PBC \cite{ORTROMDON}, the controlled Lagrangians \cite{BLOetal} or the canonical tranformation \cite{FUJSUG} methods. There are two major drawbacks to this total energy shaping controllers, first, that they require the use of complicated full-state feedback controllers, which makes them more fragile and difficult to tune. Second, that the derivation of the control law relies on the solution of a partial differential equation (PDE), a difficult task that hampers its wider application.  See \cite{HATbook,ORTROMDON} for recent reviews of the literature of PBC of mechanical systems.  

A key feature of total energy shaping methods is that the mechanical structure of the system is preserved in closed--loop, a condition that gives rise to the aforementioned PDE, which characterise the assignable energy functions. In \cite{DONetal} it was recently proposed to {\em relax} this constraint, and  concentrate our attention on the energy shaping objective only. That is, we look for a control law that stabilizes the desired equilibrium assigning to the closed--loop a Lyapunov function of the same form as the energy function of the open--loop system but with new, desired inertia matrix and potential energy function. However, we {\em do not require} that the closed--loop system is a mechanical system with  this Lyapunov function qualifying as its energy function. In this way, the need to solve the PDE is avoided.

The controller design of \cite{DONetal} is carried out proceeding from a {\em Lagrangian representation} of the system and consists of four steps. First,  the application of a (collocated) partial feedback linearization stage, {\em \`a la}  \cite{SPO}. Second,  following \cite{ACOetal}, the identification of conditions on the inertia matrix and the potential energy function that ensure the Lagrangian structure is preserved. As a corollary of the Lagrangian structure preservation two new passive outputs are easily identified. Third, a PID controller around a suitable combination of these passive outputs is applied. Now, as is well known, PID controllers define input strictly passive mappings. Thus, the passivity theorem allows to immediately conclude output strict passivity---hence, ${\cal L}_2$-stability---of the closed--loop system. To, furthermore, achieve the equilibrium stabilization objective a fourth step is required. Namely, to impose some {\em integrability} assumptions on the systems inertia matrix to ensure that the integral of the passive output, {\em i.e.}, the integrator state, can be expressed as a function of the systems generalised coordinates. Two objectives are achieved in this way, first, to assign to the closed-loop an equilibrium at the desired position. Second, adding this function to the systems storage function, to generate a {\em bona fide} Lyapunov function by assigning a minimum at the aforementioned equilibrium.  It is  shown in \cite{DONetal} that many benchmark examples satisfy the  (easily verifiable) conditions on the systems inertia matrix and its potential energy function imposed by the method.

It is widely recognised  that feedback linearization, which involves the exact cancelation of nonlinear terms, is  intrinsically non-robust. Interestingly, it has recently been shown in  \cite{ROMetal} that, for the class of systems considered in \cite{DONetal}, it is possible to identify two new passive outputs {\em without} the feedback linearization step.  The derivations in  \cite{ROMetal} are done working with the {\em Hamiltonian representation} of the system, and the  main modification is the introduction of a suitable momenta coordinate change that directly reveals the new passive outputs. These passive outputs are {\em different} from the ones obtained in \cite{DONetal} using the Lagrangian formalism. Therefore, it is not possible to compare the realms of applicability of both methods---see Remark 6 in  \cite{ROMetal}. One of the main objectives of this paper is to bridge this gap between these two approaches, this is done translating the derivations of \cite{ROMetal} to the Lagrangian framework. It turns out that this, apparently simple, transposition of results from one representation to another is far from evident and requires to establish some new structural properties of the Lagrangian system that, to the best of the authors' knowledge, have not been reported in the literature.  Moreover, as shown in the paper, the analysis in the Lagrangian framework is more insightful than the  Hamiltonian one   as it reveals some interesting connections between the passive outputs that are obscured in the momenta coordinate change used in \cite{ROMetal}. 

Two additional improvements to the controllers of \cite{DONetal,ROMetal} included in the present paper are as follows.
\begite
\item To avoid the cancellation of (part of) the potential energy function needed in \cite{DONetal,ROMetal}. Instead, we identify a class of potential energy functions for which this non-robust operation is {\em obviated} and passivity established with a {\em new} storage function. The example of cart-pendulum in an inclined plane is used to illustrate this extension.
\item In \cite{DONetal,ROMetal} Lyapunov stability of the desired equilibrium is established assigning a Lyapunov function to the closed-loop system. To ensure {\em positive definiteness} of this function additional conditions are imposed on the PID gains. We show here that these conditions are not required if we abandon the Lyapunov stabilisation objective and---invoking LaSalle's invariance principle---establish {\em equilibrium attractivity} only. Although the latter property is admittedly weaker it ensures satisfactory behavior in many applications.  
\endite

The remainder of the paper is structured as follows. In Section \ref{sec2} the new passive outputs are identified. In Section \ref{sec3} we carry out the ${\cal L}_2$-stability analysis, while in Section \ref{sec4} we show that a PID control can assign the desired equilbirium and shape the energy function to make it a Lyapunov function. Section  \ref{sec5} contains the two extensions to \cite{DONetal,ROMetal} mentioned above. In Section \ref{sec6} we illustrate the results with the examples of linear systems and a cart-pendulum on an inclined plane. We wrap-up the paper with concluding remarks and future research in Section \ref{sec7}. To enhance readability the proofs, which are more technical, are given in the appendices.\\
 
%
\noindent {\bf Notation.} $I_n$ is the $n \times n$ identity matrix and $0_{n \times s}$ is an $n \times s$ matrix of zeros, $0_n$ is an $n$--dimensional column vector of zeros, $e_i\in \rea^n,\; i \in \bar n := \{1,\dots,n\}$, are the $n$-dimensional Euclidean basis vectors. For $x \in \rea^n$, $S \in \rea^{n \times n}$, $S=S^\top>0$, we denote the Euclidean norm $|x|^2:=x^\top x$, and the weighted--norm $\|x\|^2_S:=x^\top S x$. All mappings are assumed smooth. Given a function $f:  \rea^n \to \rea$ we define the differential operator $\nabla f:=\left(\frac{\displaystyle \partial f }{\displaystyle \partial x}\right)^\top$. 
%
\section{New Passive Outputs}
\lab{sec2}
%
Following \cite{DONetal,ROMetal} the first step for the characterisation of a class of mechanical systems whose position can be regulated with classical PIDs is the identification of two new passive outputs. As explained in Section \ref{sec1} the derivations of \cite{DONetal} proceed from a Lagrangian description of the system while those of \cite{ROMetal} rely on its Hamiltonian formulation. Moreover, in  \cite{DONetal} it is assumed that the system is given in Spong's normal form \cite{SPO}, which is obtained doing a preliminary partial feedback linearization to the system. This step is obviated in  \cite{ROMetal} introducing a momenta change of coordinates. These discrepancies lead to the identification of two different classes of systems that are stabilisable via PID. To bridge the gap between the two formulations we give in this section the Lagrangian equivalent of  the passive outputs identified in  \cite{ROMetal}.     
%
\subsection{Identification of the class}
\lab{subsec21}
We consider general underactuated mechanical systems  whose dynamics is described by the well known Euler-Lagrange equations of motion
 \begequarr 
 \label{lagr}
M(q) \ddot{q} + C(q,\dot{q}) \dot{q} + \nabla V(q) = G(q)  \tau,
 \endequarr
where $q \in \rea^n$ are the configuration variables, $\tau \in \RE^m$, with $m < n$, are the control signals, $M:\rea^n \to \rea^{n \times n}$, is the positive definite generalized inertia matrix, $C(q,\dot{q})\dot q$ are the Coriolis and centrifugal forces, with $C: \rea^n \times \rea^n \to \rea^{n \times n}$ defined via the Christoffel symbols of the first kind \cite{KELbook,ORTSPO}, $V: \rea^n \to \rea$ is the systems potential energy and $G: \rea^n \to  \rea^{n \times m}$ is the input matrix, which is assumed to verify the following assumption.
\begite
\item[{\bf A1.}] The input matrix is {\em constant} and  of the form
\begequ
\lab{g}
G=\lef[{c} 0_{{s}\times m} \\ I_m  \rig],
\endequ
\endite
where $s:=n-m$.

To simplify the definition of the class, and in agreement with Assumption {\bf A1}, we  partition the generalised coordinates into its unactuated and actuated components as $q=\col(q_u,q_a)$, with $q_u \in \RE^s$ and $q_a \in \RE^{m}$. Similarly,  the inertia matrix is partitioned as
\begin{equation}
\label{Mblocks}
 M (q)= \left[ \begarr{cc} m_{uu} (q) & m^\top_{au} (q) \\ m_{au} (q) & m_{aa}  (q) \endarr \right],
\end{equation}
where $m_{uu}:\rea^n \to \RE^{s\times s}$, $m_{au}:\rea^n \to \RE^{m \times s}$ and $m_{aa}:\rea^n \to \RE^{m\times m}$. Using this notation the class is identified imposing the following assumptions.

\begite
\item[{\bf A2.}] The inertia matrix depends {\em only} on the unactuated variables $q_u$, {\em i.e.},  $M(q)=M(q_u)$.
\item[{\bf A3.}] The sub-block matrix $m_{aa}$ of the inertia matrix is {\em constant}.
\item[{\bf A4.}] The potential energy can be written as
$$
V(q)=V_a(q_a)+V_u(q_u),
$$
and $V_u(q_u)$ is bounded from below.
\endite

The lower boundedness assumption on $V_u(q_u)$ can be relaxed and is only introduced to simplify the notation avoiding the need to talk about cyclo-passive (instead of passive) outputs---see Footnote 2 of \cite{ROMetal}.  

%
\subsection{Passive outputs and storage functions}
\lab{subsec22}
The proposition below, whose proof is given in Appendix A, identifies two new passive outputs used in the PID controller design.

\begin{proposition} \em
\label{pro1}
Consider the underactuated mechanical system \eqref{lagr} verifying Assumptions {\bf A1}-{\bf A4}. Define the new input
\begequ
\lab{utau}
u = \tau - \nabla V_a(q_a)
\endequ
and the outputs 
\begin{eqnarray}
\nonumber
y_u & := & - m_{aa}^{-1}m_{au}(q_u)\dot q_u \\
y_a & := &  m_{aa}^{-1}m_{au}(q_u)\dot q_u + \dot q_a.
\label{outlag}
\end{eqnarray}
The operators $u \mapsto   y_u$ and  $u \mapsto   y_a$ are {\em passive} with storage functions  
\begequarr
H_u(q_u,\dot {q}_u) & = &  \hal \dot q_u^\top m_{uu}^s(q_u)\dot q_u + V_u(q_u) 
 \lab{newstofun1} \\
H_a(q_u,\dot{q}) & = & \hal  \dot q^\top  M_a(q_u) \dot q,
\lab{newstofun2}
\endequarr
where 
\begequarr
\nonumber
m_{uu}^s(q_u) & := & m_{uu}(q_u)-m_{au}^\top(q_u) m_{aa}^{-1}m_{au}(q_u)\\
\lab{ma}
M_a (q_u) & := & \left[ \begarr{cc} m_{au}^\top(q_u) m_{aa}^{-1}m_{au}(q_u) & m^\top_{au} (q_u) \\ m_{au}(q_u)  & m_{aa}   \endarr \right]. 
\endequarr
More precisely,
\begequarr
\lab{dotha}
\dot{ H }_a & = & u^\top   y_a \\ 
\dot{  H}_u & = &  u^\top  y_u.
\lab{dothu}
\endequarr
\end{proposition}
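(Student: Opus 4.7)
The plan is to exploit two algebraic observations that make both passivity statements transparent: a low-rank factorization of $M_a(q_u)$, and a Schur-complement decomposition of the kinetic energy. The crucial structural consequences of \textbf{A1}--\textbf{A4} that I will use repeatedly are that $\partial M/\partial q_a = 0$, $\nabla_{q_a} V = \nabla V_a(q_a)$, and $m_{aa}$ is constant.

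\textbf{Passivity of $u\mapsto y_a$.} I first note the factorization
\[
M_a(q_u) = \begin{pmatrix} m_{au}^\top(q_u) \\ m_{aa} \end{pmatrix} m_{aa}^{-1} \begin{pmatrix} m_{au}(q_u) & m_{aa} \end{pmatrix},
\]
which is checked by direct block multiplication. Introducing the actuated momentum $p_a := m_{au}(q_u)\dot q_u + m_{aa}\dot q_a$, one sees that $p_a = m_{aa}y_a$, hence $H_a = \hal p_a^\top m_{aa}^{-1} p_a = \hal y_a^\top m_{aa} y_a$. The $q_a$-block of the Euler--Lagrange equation \eqref{lagr} is $\tfrac{d}{dt}(\partial L/\partial \dot q_a) - \partial L/\partial q_a = \tau$ with $L = \hal \dot q^\top M \dot q - V$. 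Since $\partial L/\partial \dot q_a = p_a$ and, by \textbf{A2}--\textbf{A4}, $\partial L/\partial q_a = -\nabla V_a(q_a)$, we obtain $\dot p_a = \tau - \nabla V_a(q_a) = u$. Using \textbf{A3}, $\dot H_a = p_a^\top m_{aa}^{-1} \dot p_a = y_a^\top u$, which is \eqref{dotha}.

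\textbf{Passivity of $u\mapsto y_u$.} Completing the square in $\dot q_a$ inside the quadratic form $\dot q^\top M(q_u)\dot q$ gives the Schur identity
\[
\hal \dot q^\top M(q_u) \dot q = \hal \dot q_u^\top m_{uu}^s(q_u) \dot q_u + \hal y_a^\top m_{aa} y_a,
\]
so the total mechanical energy decomposes as $H_{\mathrm{tot}} := \hal \dot q^\top M \dot q + V = H_u + H_a + V_a(q_a)$. The usual power balance for Euler--Lagrange systems yields $\dot H_{\mathrm{tot}} = \dot q^\top G \tau = \dot q_a^\top \tau$. Subtracting $\dot V_a = \dot q_a^\top \nabla V_a$ and the already-established $\dot H_a = y_a^\top u$ gives
\[
\dot H_u = \dot q_a^\top (\tau - \nabla V_a) - y_a^\top u = (\dot q_a - y_a)^\top u = y_u^\top u,
\]
which is \eqref{dothu}. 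Lower-boundedness of $V_u$ in \textbf{A4} is what makes $H_u$ bounded below, so the maps are passive rather than merely cyclo-passive.

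The only technical obstacle is verifying the two algebraic identities (the factorization of $M_a$ and the Schur decomposition of $\dot q^\top M \dot q$); both are routine block-matrix manipulations once seen, but they are the whole point of why the storage functions \eqref{newstofun1}--\eqref{newstofun2} take the stated form, and they rely crucially on \textbf{A3} (to pass $m_{aa}$ through the time derivative) and on \textbf{A2} (to kill the $\partial M/\partial q_a$ term in the $q_a$-block of Euler--Lagrange). Everything else is a direct specialization of the standard mechanical-systems energy balance.
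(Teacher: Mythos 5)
Your proof is correct, and it takes a genuinely different (and in places cleaner) route than the paper's. The paper attacks $\dot H_u = u^\top y_u$ head-on: it expands $C(q,\dot q)\dot q$ via the Christoffel-symbol identity, derives the Schur-complement dynamics \eqref{ddotqua} for $\ddot q_u$, and then verifies that a residual term $L(q,\dot q)$ vanishes through three separate cancellation identities; only afterwards does it obtain $\dot H_a = u^\top y_a$ by writing $H_a = \hal\dot q^\top M\dot q - H_u + V_u$ and invoking the total power balance. You do the mirror image: you observe that $M_a$ factors so that $H_a=\hal p_a^\top m_{aa}^{-1}p_a$ with $p_a=\partial L/\partial\dot q_a$ the actuated conjugate momentum, that the $q_a$-block of the Euler--Lagrange equations collapses under \textbf{A2} and \textbf{A4} to $\dot p_a = u$, hence (with \textbf{A3}) $\dot H_a = y_a^\top u$ in one line; then you recover $\dot H_u$ by subtracting $\dot H_a$ and $\dot V_a$ from the standard power balance $\dot H_{\mathrm{tot}}=\dot q_a^\top\tau$ and using $y_u=\dot q_a - y_a$. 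Both of your algebraic identities (the rank-$m$ factorization of $M_a$ and the Schur split of the kinetic energy) check out, and the lower-boundedness remarks correctly upgrade cyclo-passivity to passivity. What your approach buys is complete avoidance of the Christoffel bookkeeping and of the $L(q,\dot q)=0$ verification; the trade-offs are that you lean on the conjugate momentum $p_a$, which is conceptually closer to the Hamiltonian derivation of \cite{ROMetal} that this paper deliberately recasts in velocity coordinates, and that you invoke the workless-forces property of $C$ as a black box (legitimate here, since $C$ is defined via Christoffel symbols of the first kind). The paper's longer computation is not wasted, though: it produces as by-products the explicit reduced dynamics \eqref{muuddqu}--\eqref{ddotqua} and the maps $C_{mu}$, $D_{mu}$ that are reused later in the well-posedness expression \eqref{bfS} and in the LaSalle analysis, so your shorter proof of Proposition \ref{pro1} would not by itself replace Appendix A.
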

%
%
\subsection{Discussion}
\lab{subsec23}

Assumption {\bf A1} means that the input force co-distribution is integrable \cite{BULLEW}. The interested reader is referred to \cite{OLF} where some conditions of transformability of a general input matrix to the, so-called, no input-coupling form \eqref{g} are given. Assumption {\bf A2} implies that the shape coordinates coincide with the un-actuated coordinates and are referred as ``Class II" systems in \cite{OLF}. 

Assumptions {\bf A3} and {\bf A4} are technical conditions. In particular, the latter condition is required because, as clear from \eqref{utau}, the first step in our design is to apply a preliminary feedback that cancels the term $V_a(q_a)$. See Subsection \ref{subsec41} for a removal of this condition for afine functions $V_a(q_a)$. The class of systems verifying all these assumptions contains many benchmark examples, including the spider crane, the 4-DOF overhead crane and the spherical pendulum on a puck. 

In  \cite{OLF}, see also \cite{OLFtac}, similar assumptions are imposed to identify mechanical systems that are transformable, via a change of coordinates, to the classical feedback and feedforward forms. The motivation is to invoke the well-known backstepping or forwarding techniques of nonlinear control to design stabilising controllers. It should be noticed that, although these two techniques are claimed to be systematic, similarly to total energy shaping techniques \cite{ORTROMDON} and {\em in contrast} with the methodology proposed here, they still require the solution of PDEs, see Subsections 4.2 and 4.3 of \cite{ASTbook}.
    
From \eqref{outlag}-\eqref{ma} it is clear that $y_u+y_a=\dot q_a$ and $H_u(q_u,\dot q)+H_a(q_u,\dot q)=H(q_u,\dot q)$, with 
$$
H(q_u,\dot q)=\hal \dot q^\top M(q_u) \dot q + V_u(q_u), 
$$ 
that is, the total co-energy of the system in closed-loop with \eqref{utau}. In other words, the new passive outputs are obtained splitting the $(1,1)$ block of the kinetic energy function into two components with one containing the Schur complement of the  $(2,2)$ block, which is assumed constant. Similar interpretations are available for the Hamiltonian derivation of the passive outputs reported in \cite{ROMetal}, but in this case for the system represented in the new coordinates.  

We are now in position to reveal the relationship between the passive outputs \eqref{outlag} and the ones reported in \cite{ ROMetal}, denoted here $Y_u$ and $Y_a$, which is as follows
$$
\lef[{c}  Y_u \\ Y_a \rig]=\lef[{cc} m_{aa} & 0_{m \times m} \\ I_m & I_m \rig]\lef[{c}  y_u \\ y_a \rig].
$$  
See Remark 5 of \cite{ROMetal}.
%
\section{PID Control: Well-posedness and ${\cal L}_2$-Stability}
\lab{sec3}
%
 Similarly to \cite{DONetal,ROMetal} we propose the  PID-PBC 
\begequarr
\nonumber
k_e {u} & = & -\left(K_P  y_d+{K_I }z_1 + K_D \dot{{ y}}_d \right)\\
\dot z_1 & = & y_d,\;z_1(0)=z_1^0,
\lab{pidcon1}
\endequarr
where 
\begequ
\lab{yd}
 y_d:=k_a  y_a + k_u  y_u,
\endequ
and we have to select the {\em nonzero} constants $k_e,k_a,k_u \in \rea,\;k_a \neq k_u$, the matrices $K_P,K_I,K_D \in \rea^{m \times m},\; K_P,K_I>0$, $K_D \geq 0$ and the {\em constant} vector $z_1^0 \in \rea^m$. Notice that, replacing  \eqref{outlag} in \eqref{yd}, we can write $y_d$ in the form
\begequ
\lab{yd1}
y_d = k_a \dot q_a + (k_a - k_u)m_{aa}^{-1}m_{au}(q_u)\dot q_u.
\endequ

We remark the presence of an unusual (sign-indefinite) gain $k_e$ in \eqref{pidcon1} and the fact that the initial conditions of the integrator $z_1$ are {\em fixed}. The motivation for the former is to give more flexibility to achieve Lyapunov stabilization and is discussed in remark {\bf (R2)} in Subsection \ref{subsec32}.  On the other hand, imposing $z_1(0)=z_1^0$ to the control is required to assign the desired equilibrium point to the closed-loop for Lyapunov stabilization as it is thoroughly explained in Section \ref{sec4}. It should be underscored that for the input-output analysis carried out in this section the PID \eqref{pidcon1} can be implemented with arbitrary initial conditions for $z_1$. 
 %
\subsection{Well-posedness condition}
\lab{subsec31}
Before proceeding to analyse the stability of the closed-loop it is necessary to ensure that the control law \eqref{pidcon1} can be computed without differentiation nor singularities that may arise due to the presence of the derivative term $\dot y_d$.  For, after some lengthy but straightforward calculations to compute $\dot y_d$,  we can prove that \eqref{pidcon1} is equivalent to
\begequ
\lab{bark}
 K(q_u) {u} =   -K_P  y_d-{K_I }z_1 -  {S}(q,\dot q),
\endequ
where the mapping $K:\rea^s \to \rea^{m \times m}$ is given by
\begequ
\lab{bark0}
\hspace{-8mm}  K(q_u):= k_e I_m + k_a K_D m_{aa}^{-1} + k_u K_D  m_{aa}^{-1}m_{au}(m^s_{uu})^{-1}m_{au}^\top m_{aa}^{-1}.
\endequ
 and  $ S:\rea^n \times \rea^s \mapsto \rea^m$ is the globally defined mapping
 \begin{eqnarray}
{S}(q,\dot q_u) & := & -k_u K_D \Big[ m_{aa}^{-1}\dot{m}_{au}\dot q_u +m_{aa}^{-1}m_{au}(m^s_{uu})^{-1}m_{au}^\top m_{aa}^{-1} [\nabla(m_{au}\dot q_u)\dot q_u] \nonumber \\
&-&m_{aa}^{-1}m_{au}(m^s_{uu})^{-1}[C_{mu}(q_u,\dot q_u)\dot q_u+D_{mu}(q_u,\dot q)+\nabla V_u] \Big],
\label{bfS}
\end{eqnarray}
with the maps $C_{mu}:\rea^s \times \rea^s \mapsto \rea^{s \times s}$ and $D_{mu}:\rea^s \times \rea^n \mapsto \rea^s$ defined in \eqref{Cmuu} and \eqref{Dmu}, respectively. Notice that $S(q,\dot q)=0$ if there is no derivative action in the control.

To ensure that the control law \eqref{bark}---and consequently \eqref{pidcon1}---are well-defined we impose the full rank assumption. 

\begite
\item[{\bf A5.}] The controller tuning gains $k_e,k_a,k_u \in \rea$, $K_D \in \rea^{m \times m}$, $K_D \geq 0$ are such that 
$$
\det[  K(q_u)]\neq 0.
$$
\endite

It is clear that the analytic evaluation of the derivative term $\dot y_d$ considerably complicates the control expression. In some practical applications this term can be evaluated using an approximate differentiator of the form $\frac{bp}{p+a}$, with $p={d \over dt}$ and $a,b \in \rea_+$ designer chosen constants that regulate the bandwidth and gain of the filter. That is, a practical realisation of \eqref{pidcon1} is given by
\begequarr
\nonumber
 k_e {u} &  = & - K_P  y_d- {K_I }z_1  - K_D a(y_d-z_2) \\ 
\nonumber 
\dot z_1 & = & y_d,\;z_1(0)=z_1^0\\
\lab{appdif}
\dot z_2 & = & b(y_d-z_2).
\endequarr
It should be stressed that, as discussed in \cite{ASTHAG}, most practical PID controllers are implemented in this way. However, for applications that  require fast control actions---like the pendular system presented in Section \ref{sec6}---this approximation is not adequate. In this cases other, more advanced, techniques to compute time derivatives may be considered. 
%
\subsection{${\cal L}_2$-stability analysis}
\lab{subsec32}
As indicated in the introduction PIDs define input strictly passive maps therefore it is straightforward to prove ${\cal L}_2$-stability if it is wrapped around a passive output. Since $y_d$ is a linear combination of passive outputs \eqref{yd} and we have introduced the gain $k_e$ in \eqref{pidcon1}---with all these gains being {\em sign indefinite}---some care must be taken to ensure that we are dealing with the {\em negative} feedback interconnection of passive maps. This analysis is summarised in the two lemmata and the corollary below that establish the ${\cal L}_2$-stability of the closed-loop system represented in Fig. \ref{fig0}, where as it is customary we have added a external signal $d$ to define the closed-loop map.

\begin{figure}[htp]
 \centering
\includegraphics[width=.6\linewidth]{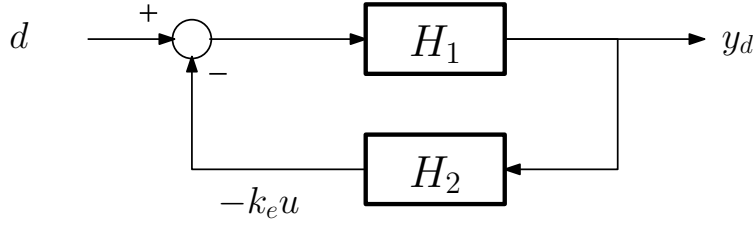}
 \caption{Block diagram representation of the system \eqref{lagr} in closed-loop with  \eqref{utau} and the PID  \eqref{outlag}, \eqref{pidcon1}, \eqref{yd} with an external signal $d$.}
 \label{fig0}
\end{figure}

\begin{lemma} \em
\lab{lem1}
Define an operator ${\bf H}_1: k_e u \mapsto y_d$ whose dynamics is given by the system \eqref{lagr} verifying Assumptions {\bf A1}-{\bf A4} in closed-loop with \eqref{utau} where $y_d$ is defined in \eqref{outlag}, \eqref{yd}.  Assume 
\begequ
\lab{sigcon}
\sign(k_e)=\sign(k_a)=\sign(k_u).
\endequ
The operator ${\bf H}_1$ is {\em passive}, that is, there exists $\beta_1 \in \rea$ such that
$$
\int_0^t k_e u^\top(s) y_d(s) ds \geq \beta_1,\;\forall t \geq 0.
$$
\end{lemma}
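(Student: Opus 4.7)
The plan is to leverage Proposition~\ref{pro1} directly: since the individual maps $u \mapsto y_a$ and $u \mapsto y_u$ are already known to be passive with storage functions $H_a$ and $H_u$, the operator $\mathbf{H}_1$ should be passive with a suitable linear combination of these functions playing the role of the composite storage function. The role of the sign condition \eqref{sigcon} will be precisely to guarantee that the coefficients of this combination are all positive, so that the combination remains bounded from below.

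First, I would write out the power balance. From \eqref{dotha}--\eqref{dothu} and the definition \eqref{yd} of $y_d$,
\begin{equation*}
u^\top y_d \;=\; k_a u^\top y_a + k_u u^\top y_u \;=\; k_a \dot H_a + k_u \dot H_u,
\end{equation*}
and multiplying by $k_e$,
\begin{equation*}
k_e u^\top y_d \;=\; \frac{d}{dt}\Bigl( k_e k_a H_a + k_e k_u H_u \Bigr) \;=:\; \dot W.
\end{equation*}
So the candidate storage function for $\mathbf{H}_1$ is $W := k_e k_a H_a + k_e k_u H_u$.

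Next, I would verify that $W$ is bounded from below. The hypothesis \eqref{sigcon} implies $k_e k_a > 0$ and $k_e k_u > 0$. For $H_a$, observe that using the factorisation $M_a(q_u) = \begin{bmatrix} P^\top & I_m \end{bmatrix}^\top m_{aa} \begin{bmatrix} P & I_m \end{bmatrix}$ with $P:=m_{aa}^{-1}m_{au}(q_u)$ (which follows directly from \eqref{ma} and $m_{aa}>0$), one gets $H_a = \tfrac12 \|P\dot q_u + \dot q_a\|_{m_{aa}}^2 = \tfrac12 \|y_a\|_{m_{aa}}^2 \geq 0$. For $H_u$, the matrix $m_{uu}^s$ is the Schur complement of $m_{aa}$ in $M(q_u)>0$, hence positive definite, so $\tfrac12\dot q_u^\top m_{uu}^s\dot q_u \geq 0$; together with the lower-boundedness of $V_u$ from Assumption~\textbf{A4}, this gives $H_u \geq V_u^{\min} > -\infty$. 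Therefore $W(t) \geq k_e k_u V_u^{\min} =: W_{\min}$ for all $t$.

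Finally, integrating $\dot W = k_e u^\top y_d$ from $0$ to $t$ yields
\begin{equation*}
\int_0^t k_e u^\top(s)\, y_d(s)\, ds \;=\; W(t) - W(0) \;\geq\; W_{\min} - W(0) \;=:\; \beta_1,
\end{equation*}
which is the claimed passivity inequality. I do not foresee a serious technical obstacle here; the only point deserving care is identifying that the sign condition \eqref{sigcon} is exactly what is needed so that, despite the freedom to pick sign-indefinite gains $k_e,k_a,k_u$, the composite storage function inherits the lower-boundedness of $H_a$ and $H_u$ individually.
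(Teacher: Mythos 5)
Your proof is correct and follows essentially the same route as the paper: the paper's proof takes $H_1 := k_e[k_a H_a + k_u H_u]$ as the storage function, notes $\dot H_1 = k_e u^\top y_d$ from Proposition~\ref{pro1}, and asserts lower-boundedness from \eqref{sigcon}. Your additional verification that $H_a = \tfrac12\|y_a\|^2_{m_{aa}} \geq 0$ and that $m_{uu}^s>0$ (Schur complement of $m_{aa}$ in $M>0$) simply makes explicit the detail the paper leaves implicit.
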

\begin{proof} 
The proof follows directly from Proposition \ref{pro1} noting that, because of \eqref{sigcon}, the function 
$$
H_1(q,\dot q):=k_e[k_a  H_a(q_u,\dot q)+k_u  H_u(q_u,\dot q_u)]
$$
is bounded from below and---due to \eqref{dotha}, \eqref{dothu}---it verifies $\dot H_1 = k_e u^\top y_d.$ 
\qed
\end{proof}

\begin{lemma} \em
\lab{lem2}
Define the linear time-invariant operator ${\bf H}_2:y_d \mapsto -k_eu$ defined by the PID controller \eqref{pidcon1}.  The operator ${\bf H}_2$ is {\em input strictly passive}. More precisely, there exists $\beta_2 \in \rea$ such that
$$
\int_0^t y_d^\top(s) [-k_e u(s)] ds \geq  {\lambda_{\min}( {K_P})}\int_0^t |y_d(s)|^2 ds + \beta_2,\;\forall t \geq 0,
$$
where $ \lambda_{\min}(\cdot)$ is the minimum eigenvalue.
\end{lemma}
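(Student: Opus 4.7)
The plan is to substitute the PID control law from \eqref{pidcon1} directly into the integrand $y_d^\top(-k_e u)$ and then recognize each of the three resulting terms as either a pointwise lower bound in terms of $|y_d|^2$ or a total time derivative whose integral collapses to boundary contributions.

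First I would write
$$
y_d^\top(-k_e u) \;=\; y_d^\top K_P y_d \;+\; y_d^\top K_I z_1 \;+\; y_d^\top K_D \dot y_d.
$$
The middle term I would handle using the integrator dynamics $\dot z_1 = y_d$, which gives $y_d^\top K_I z_1 = \dot z_1^\top K_I z_1 = \hal \frac{d}{dt}(z_1^\top K_I z_1)$. The last term is, by symmetry of $K_D$, simply $\hal \frac{d}{dt}(y_d^\top K_D y_d)$. Integrating from $0$ to $t$ therefore yields
$$
\int_0^t y_d^\top(s)[-k_e u(s)]\,ds \;=\; \int_0^t y_d^\top(s) K_P y_d(s)\,ds \;+\; \hal \bigl[z_1^\top K_I z_1\bigr]_0^t \;+\; \hal \bigl[y_d^\top K_D y_d\bigr]_0^t.
$$

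Next I would exploit the sign hypotheses $K_P>0$, $K_I>0$, $K_D \ge 0$. The two boundary contributions evaluated at $t$ are nonnegative and can therefore be dropped from a lower bound, leaving only the (constant) initial-time terms. The remaining integrand satisfies $y_d^\top K_P y_d \ge \lambda_{\min}(K_P) |y_d|^2$. Combining these two observations produces the claimed inequality with the explicit constant
$$
\beta_2 \;:=\; -\hal (z_1^0)^\top K_I z_1^0 \;-\; \hal y_d(0)^\top K_D y_d(0),
$$
which is finite and independent of $t$, so passivity in the stated sense holds.

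There is no real obstacle here: the argument is the textbook PID-passivity calculation, and the only subtlety is bookkeeping of the initial conditions of the integrator state $z_1$ and of $y_d(0)$ (which is well defined because, by Assumption \textbf{A5} and the equivalent realisation \eqref{bark}, $u$ is computable without differentiating $y_d$ and $y_d$ itself is a smooth function of $(q,\dot q)$). Note also that the sign-indefinite constant $k_e$ plays no role in this lemma, since the operator is defined as $y_d \mapsto -k_e u$ and only the \emph{internal} linear structure of the PID is used; the compatibility of $k_e$ with the passivity of $\mathbf{H}_1$ has already been arranged in Lemma \ref{lem1} through \eqref{sigcon}.
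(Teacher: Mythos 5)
Your proposal is correct and follows essentially the same route as the paper: substitute the PID law, bound the proportional term by $\lambda_{\min}(K_P)|y_d|^2$, recognize the integral and derivative terms as total time derivatives of $\hal\|z_1\|^2_{K_I}$ and $\hal\|y_d\|^2_{K_D}$, and discard the nonnegative terms at time $t$. Your $\beta_2$ even carries the correct factors of $\hal$ that the paper's stated constant drops (harmlessly, since its constant is merely more negative).
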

\begin{proof}
Let us compute
\begequarrs
y_d^\top (-k_e u) & = &   y_d^\top  {K_P}  y_d+ y_d^\top {K_I}  z_1 +  y_d^\top{K_D } \dot y_d\\  
                     & \geq &    \lambda_{\min}( {K_P})  |y_d|^2+ \dot z_1^\top {K_I}  z_1 +  y_d^\top{K_D } \dot y_d.
\endequarrs
The proof is completed integrating the expression above and setting 
$$
\beta_2=-\|z_1(0)\|^2_{K_I} -  \|y_d(0)\|^2_{K_D}.
$$
\qed
\end{proof}

${\cal L}_2$-stability of the closed-loop system represented in Fig. \ref{fig0} is an immediate corollary of the two lemmata above, the Passivity Theorem \cite{DESVID} and the fact that \eqref{sigcon} ensures Assumption {\bf A5}---hence the feedback system is well-posed. 

\begin{corollary}\em
\lab{cor1}
Consider the system \eqref{lagr} verifying Assumptions {\bf A1}-{\bf A4} in closed-loop with  \eqref{utau} and  the PID  \eqref{outlag},  \eqref{pidcon1}, \eqref{yd} with an external signal $d$. Assume \eqref{sigcon} holds. The operator $d \mapsto y_d$ is ${\cal L}_2$-stable. More precisely, there exists $\beta_3 \in \rea$ such that
$$
\int_0^t |y_d(s)|^2 ds \leq  {1 \over \lambda_{\min}( {K_P})}\int_0^t |d(s)|^2 ds + \beta_3,\;\forall t \geq 0.
$$
\end{corollary}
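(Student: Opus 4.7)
The plan is a direct application of the Passivity Theorem \cite{DESVID} to the feedback loop of Fig.~\ref{fig0}, using Lemmas~\ref{lem1} and \ref{lem2} as the two passive components: ${\bf H}_1:k_e u\mapsto y_d$ is passive and ${\bf H}_2:y_d\mapsto -k_e u$ is input strictly passive with gain $\lambda_{\min}(K_P)$.

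First I would dispatch the well-posedness question. Assumption~{\bf A5} demands $\det[K(q_u)]\neq 0$ so that the implicit relation \eqref{bark} returns a well-defined $u$ at every state. Inspecting the definition \eqref{bark0} of $K(q_u)$ and using the sign condition \eqref{sigcon}, all three terms forming $K(q_u)$ are same-signed positive (semi)definite matrices; their sum is therefore nonsingular and Assumption~{\bf A5} is automatic, so the closed loop of Fig.~\ref{fig0} is well-defined.

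Next I would close the loop with the external excitation $d$ and add the two passivity inequalities along the closed-loop trajectories. Because the loop is in negative feedback, the internal cross-term $\int_0^t(k_e u)^\top y_d\,ds$ appears with opposite signs in the two inequalities of Lemmas~\ref{lem1}--\ref{lem2} and cancels upon addition, leaving an estimate of the form
\[
\int_0^t d^\top(s)\,y_d(s)\,ds \;\ge\; \lambda_{\min}(K_P)\int_0^t |y_d(s)|^2\,ds \;+\;(\beta_1+\beta_2).
\]
Bounding the left-hand cross term by Young's inequality, $d^\top y_d\le \tfrac{1}{2\varepsilon}|d|^2+\tfrac{\varepsilon}{2}|y_d|^2$, and choosing $\varepsilon$ so that the resulting quadratic term in $y_d$ is absorbed into $\lambda_{\min}(K_P)\int|y_d|^2\,ds$ on the right, yields an $\mathcal{L}_2$-estimate of the announced shape with $\beta_3$ collecting the initial-condition bias $\beta_1+\beta_2$.

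The only delicate point I anticipate is sign bookkeeping in the cancellation step: one must verify from the block diagram of Fig.~\ref{fig0} that the polarity of $k_e$ is consistent on both sides of the loop, so that the internal products cancel rather than reinforce. This is precisely what the sign alignment \eqref{sigcon} arranges through Lemma~\ref{lem1}, where $k_e$ enters the storage function $H_1$ in such a way that it is bounded below exactly when \eqref{sigcon} holds; everything else in the proof is routine.
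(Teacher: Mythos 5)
Your proposal follows exactly the route the paper takes: Corollary \ref{cor1} is presented there as an immediate consequence of Lemmas \ref{lem1} and \ref{lem2}, the Passivity Theorem of \cite{DESVID}, and the observation that \eqref{sigcon} guarantees the well-posedness Assumption {\bf A5}, which is precisely the three-step argument you spell out. The only point worth flagging is that your Young-inequality bookkeeping in the last step actually yields the gain $1/\lambda^2_{\min}(K_P)$ (or, more generally, $\tfrac{1}{2\varepsilon}\big/\big(\lambda_{\min}(K_P)-\tfrac{\varepsilon}{2}\big)$) rather than the $1/\lambda_{\min}(K_P)$ announced in the statement, but this is inherited from the paper's own unproved constant and does not affect the ${\cal L}_2$-stability conclusion.
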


The  ${\cal L}_2$-stability  analysis is of limited interest for the following two reasons. 
\begite
\item[{\bf (R1)}] ${\cal L}_2$-stability is a rather weak property. For instance, boundedness of trajectories is not guaranteed and the system can be destabilised by a constant external disturbance. Hence, we are interested in establishing a stronger property, {\em e.g.}, Lyapunov stability of a desired equilibrium.
\item[{\bf (R2)}] As explained in \cite{DONetal}, the gain $k_e$ is introduced in \eqref{pidcon1} to give more flexibility to the design. As shown in \cite{DONetal,ROMetal} and the example of Section \ref{sec6}, this feature is lost if we impose the condition \eqref{sigcon}, required by the analysis above. 
\endite

Before closing this subsection we make the following observation. It is easy to show that the approximated PID \eqref{appdif} defines an {\em output} strictly passive map, which is different from the   {\em input} strict passivity property of the original PID established in Lemma \ref{lem2}. Application of the Passivity Theorem proves now that the map $d \mapsto u$ is  ${\cal L}_2$-stable. Unfortunately, because of the presence of the integrator, nothing can be said about the map $d \mapsto y_d$. 
%
\section{Lyapunov Stabilisation via PID Control}
\lab{sec4}
%
In this section we prove that, under some additional {\em integrability} conditions on the inertia matrix, it is possible to ensure Lyapunov stability of a desired equilibrium position via PID-PBC.

In the sequel we will consider the system  \eqref{lagr} verifying Assumptions {\bf A1}-{\bf A4} in closed-loop with  \eqref{utau}. As shown in Appendix A it may be written as  \eqref{muuddqu},  \eqref{maaddqa} that we repeat here, in a slightly different form, for ease of reference
\begin{eqnarray}
m_{uu}\ddot q_u + m_{au}^\top \ddot q_a +C_{mu}(q_u,\dot q_u)\dot q_u +D_{mu}(q_u,\dot q) +\nabla V_u(q_u) & =& 0 
\label{muuddqu1} \\
m_{aa} \ddot q_a +m_{au} \ddot q_u+\nabla_{q_u} [m_{au}(q_u) \dot q_u]\dot q_u & = & u, 
 \label{maaddqa1}
\end{eqnarray}
with $C_{mu}(q_u,\dot q_u)$ and $D_{mu}(q_u,\dot q)$ given by  \eqref{Cmuu} and \eqref{Dmu}, respectively. We bring to the readers attention the important fact that 
\begequ
\lab{dmu1}
D_{mu}(q_u,0)=0.
\endequ
%
\subsection{An integrability assumption for equilibrium assignment}
\lab{subsec41}
A first step for Lyapunov stabilisation of a desired constant state is, obviously, to ensure that it is an {\em equilibrium} of the closed-loop. Since the system \eqref{lagr} is underactuated it is not possible to choose an arbitrary desired equilibrium, instead, it must be chosen as a member of the assignable equilibrium set. For the system  \eqref{muuddqu1},  \eqref{maaddqa1} this is given as
$$
{\cal E}:=\{(q,\dot q) \in \rea^n \times \rea^n\;|\; \dot q=0\;\mbox{and}\; \nabla V_u(q_u)=0\},
$$ 
where we have used the property \eqref{dmu1}. An additional difficulty stems from the fact that the signal $y_d$ is equal to zero for {\em all} constant values of $q$. Hence, the PID control \eqref{pidcon1}, \eqref{yd1} does not allows us to impose an assignable equilibrium to the closed-loop. To overcome this obstacle  we add a condition to the system inertia matrix to be able to express the integral term of the PID, {\em i.e.}, the signal $z_1$, as a function of $q$. For, we assume the following integrability condition.

\begite
\item [{\bf A6.}]  The rows of $m_{au}(q_u)$, denoted $(m_{au}(q_u))^i$, are gradient vector fields, that is,
$$
\nabla(m_{au})^i=[\nabla (m_{au})^i]^\top, \;\forall i \in \bar{m}.
$$
\endite

The latter assumption is equivalent to the existence of a mapping $ V_N:\RE^{s} \to \RE^m$ such that
\begequ
\lab{dotvn}
\dot{ V}_N=m_{aa}^{-1} m_{au}(q_u)\dot{q}_u.
\endequ
Replacing the latter in \eqref{yd1} and this, in its turn, in \eqref{pidcon1} yields
\begequ
\lab{dotz1}
\dot z_1 = y_d=k_a \dot q_a {+ (k_a - k_u)} \dot V_N(q_u).
\endequ
Integrating \eqref{dotz1} with $z_1(0)=z_1^0$ we finally get
\begequ
\lab{z11}
z_1(t)= k_a q_a(t) {+ (k_a - k_u)} V_N(q_u(t))  + \kappa,
\endequ
where
$$
\kappa:= z_1^0-k_a q_a(0)-(k_a-k_u)V_N(q_u(0)).
$$
In this way we have achieved the desired objective of adding a term dependent on $q$ in the control signal.

We have the following simple propostion.

\begin{proposition} \em
\label{pro2}
Consider the underactuated mechanical system \eqref{lagr} satisfying Assumptions {\bf A1}--{\bf A4} and {\bf A6}, together with \eqref{utau} and the PID controller \eqref{pidcon1} and \eqref{yd1} verifying the well-posedness Assumption {\bf A5}. Fix $q_u^\star \in \rea^s$ such that
\begequ
\lab{nabvu1}
\nabla V_u(q_u^\star)=0
\endequ
and
\begequ
\lab{z10}
z_1^0= k_a [q_a(0) - q_a^*]+(k_a-k_u)[V_N(q_u(0))- V_N(q_u^\star)],
\endequ
where $q_a^\star \in \rea^m$ is {\em arbitrary}. Then, $(q,\dot q)=(q^\star,0)$ is an equilibrium point of the closed-loop system.  
\end{proposition}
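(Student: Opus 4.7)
The plan is to verify directly that setting $\dot q \equiv 0$ and $q \equiv q^\star$ satisfies both the plant equations \eqref{muuddqu1}, \eqref{maaddqa1} and the control equation \eqref{bark}. First I would evaluate the unactuated equation \eqref{muuddqu1} at $(q,\dot q)=(q^\star,0)$ with $\ddot q=0$: the first three terms vanish identically because they are multiplied by $\dot q$ or $\ddot q$, the dissipation-like term $D_{mu}(q_u^\star,0)$ vanishes by property \eqref{dmu1}, and the residual $\nabla V_u(q_u^\star)$ vanishes by the hypothesis \eqref{nabvu1}. So \eqref{muuddqu1} is automatically satisfied, and it only remains to verify \eqref{maaddqa1}, which at $\ddot q=0$, $\dot q=0$ reduces to the single requirement $u=0$.

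Next I would evaluate the well-posed control law in the form \eqref{bark}, namely $K(q_u)u=-K_P y_d-K_I z_1-S(q,\dot q)$, along the candidate equilibrium trajectory. From the expression \eqref{yd1}, $y_d$ is linear in $\dot q$ and therefore vanishes at $\dot q=0$. Inspecting \eqref{bfS} term by term shows that $S(q,\dot q)$ is a sum of quantities each of which is proportional either to $\dot q_u$, to $\dot m_{au}$ (which itself is proportional to $\dot q_u$), to $C_{mu}\dot q_u$, to $D_{mu}(q_u,\dot q)$ (which vanishes by \eqref{dmu1}), or to $\nabla V_u$; the last of these vanishes at $q_u^\star$ by \eqref{nabvu1}. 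Hence $S(q^\star,0)=0$.

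The key step is to show that the integrator state $z_1$ also vanishes at the equilibrium. This is where Assumption \textbf{A6} and the specific initial condition \eqref{z10} come into play. Using the integrated form \eqref{z11} of $z_1$ with the constant $\kappa=z_1^0-k_a q_a(0)-(k_a-k_u)V_N(q_u(0))$, I substitute \eqref{z10} into $\kappa$ and the dependence on the initial data cancels, leaving
$$
z_1(t)= k_a[q_a(t)-q_a^\star]+(k_a-k_u)[V_N(q_u(t))-V_N(q_u^\star)],
$$
which is manifestly zero at $q=q^\star$. Combining the three facts $y_d=0$, $z_1=0$, $S=0$ at $(q^\star,0)$ gives $K(q_u^\star)u=0$, and invoking the well-posedness Assumption \textbf{A5} (so that $K(q_u^\star)$ is invertible) yields $u=0$. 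Back-substitution into \eqref{maaddqa1} then closes the verification.

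The only non-routine point is the $z_1=0$ argument, which requires Assumption \textbf{A6} so that $z_1$ admits the closed-form representation \eqref{z11} as a function of $q$ plus a constant of integration, and then the prescription \eqref{z10} is precisely the one that aligns this constant so that $z_1$ vanishes at $q^\star$. The rest of the proof is a direct substitution aided by \eqref{dmu1}.
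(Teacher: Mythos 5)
Your proposal is correct and follows essentially the same route as the paper: both arguments reduce the verification to showing $u=0$ at $(q^\star,0)$, using the fact that $y_d$ vanishes at $\dot q=0$ and that the integrated form \eqref{z11} combined with the prescription \eqref{z10} forces $z_1=0$ at $q=q^\star$. The only difference is that you work with the realized control law \eqref{bark} and explicitly check $S(q^\star,0)=0$ and the unactuated equation \eqref{muuddqu1}, details the paper dispatches more briefly by noting $(q^\star,0)\in{\cal E}$ and evaluating \eqref{pidcon1} directly at $\dot q=0$; this is a slightly more careful presentation of the same argument, not a different one.
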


\begin{proof}
First, notice that \eqref{nabvu1} ensures $(q^\star,0) \in {\cal E}$ for any  $q_a^\star \in \rea^m$. Evaluating the control signal \eqref{pidcon1}, \eqref{yd1}  at  $\dot q=0$ yields
\begequarr
\nonumber
 u|_{\dot q=0} & = & -{K_I \over k_e} z_1 \\
                       & = & {K_I \over k_e}\{k_a (q_a-q_a^\star) {+ (k_a - k_u)} [V_N(q_u)-V_N(q_u^\star)]\},
\lab{uatequ}
\endequarr
where we have used \eqref{z11} and \eqref{z10} to get the second identity. The proof is completed replacing the expression above, which is zero at $q=q^\star$, in \eqref{maaddqa1} and setting $(q,\dot q)=(q^\star,0)$. 
\end{proof}

%
\subsection{Construction of the Lyapunov function}
\lab{subsec42}
Define the function $U:\rea^n \times \rea^n \times \rea^m \to \rea$
\begin{equation}
U(q,\dot q,z_1) :=   k_e[k_a  H_a(q_u,\dot q)+k_u  H_u(q_u,\dot q_u)]+ {1 \over 2} \| y_d\|^2_{K_D}+ {1 \over 2}\|z_1\|_{K_I}^2,
\lab{u}
\end{equation}
with $y_d$ given in \eqref{yd1}. From \eqref{dotha}-\eqref{yd} it is straightforward to show that 
\begequ
\lab{dotu}
\dot U = -\| y_d\|^2_{K_P} \leq 0.
\endequ
A LaSalle-based analysis \cite{KHA} allows us to establish from \eqref{dotu} some properties of the system trajectories, for instance to conclude that that $y_d(t) \to 0$---see Subsection \ref{subsec52}. However, as indicated in the introduction our objective in the paper is to prove Lyapunov stability. Towards this end, it is necessary to construct a Lyapunov function for the closed-loop system, which is done finding a function $H_d:\rea^n \times \rea^n \to \rea$ such that
\begequ
\lab{uequhd}
U(q,\dot q,z_1) \equiv H_d(q,\dot q).
\endequ
In view of \eqref{dotu} and \eqref{uequhd} we have that $H_d(q(t),\dot q(t))$ is a non-decreasing function therefore it will be a {\em bona fide} Lyapunov function if we can ensure it is {\em positive definite}. 

To establish the latter we observe from \eqref{yd1} that  the first three terms of \eqref{u} can be written as
\begequ
\lab{rhs}
k_e[k_a  H_a(q_u,\dot q)+k_u  H_u(q_u,\dot q_u)]+ {1 \over 2} \| y_d\|^2_{K_D}=\hal \dot q^\top M_d(q_u) \dot q + k_e k_u V_u(q_u),
\endequ
with
\begequ
\label{mdph}
M_d(q_u)\hspace{-1mm}:= \hspace{-1mm}\left[ \begarr{cc}   A(q_u)   &  k_e k_a m_{au}^\top(q_u)+k_a(k_a-k_u) m_{au}^\top(q_u) m_{aa}^{-1}K_D   \\  k_e k_a {m_{au}} + {k_a} (k_a-k_u)K_D m_{aa}^{-1}m_{au}(q_u)   &  k_e k_a m_{aa} +k_a^2 {K_D} \endarr \right]
\endequ
and
$$
A(q_u):=k_e k_u m^s_{uu}(q_u) + k_ek_a m_{au}^\top(q_u)m_{aa}^{-1}m_{au}(q_u)  + (k_a-k_u)^2m_{au}^\top (q_u) m_{aa}^{-1}K_D m_{aa}^{-1}m_{au}(q_u).
 $$
Unfortunately, the right hand side of \eqref{rhs} {\em does not} depend on $q_a$ and, consequently, cannot be a positive definite function of the full state. At this point we invoke Assumption {\bf A6} and replace \eqref{z11} in \eqref{u} to get
 \begequarr
\lab{tilhd}
 H_d(q, \dot q)= \frac{1}{2}   \dot q^\top  M_d(q_u) \dot q+  V_d(q).
\endequarr
with 
$$
V_d(q):=k_ek_uV_u(q_u) + \frac 12 || k_aq_a {+ (k_a-k_u)}   V_N(q_u)+\kappa||^2_{K_I},
$$
where we notice the presence of the term $\kappa$, which contains the initial condition of the integrator $z_1(0)$.\\

Before closing this subsection we elaborate on remark {\bf (R2)} of Subsection \ref{subsec32} and attract the readers attention to the presence of the term $k_ek_u$ in the first right hand term of \eqref{vdph}. In most pendular systems $q_u$ represents the pendulum angle and the potential energy function $V_u(q_u)$ has a {\em maximum} at its upward position, which is an unstable equilibrium. If the control objective is to  swing up the pendulum and stabilize this equilibrium this maximum can be transformed into a minimum choosing the controller gains such that $k_ek_u < 0$. See the example in Subsection \ref{subsec62}.
%
\subsection{Lyapunov stability analysis}
\lab{subsec43}
The final step in our Lyapunov analysis is to ensure that $H_d(q,\dot q)$ is {\em positive definite}. For, we first select the integrators initial conditions as \eqref{z10} that yields
\begin{equation}\label{vdph}
V_d(q):=k_ek_uV_u(q_u) + \frac 12 \| k_a(q_a-q_a^\star) {+ (k_a-k_u)} [V_N(q_u)-V_N(q_u^\star)]\|^2_{K_I},
\end{equation}
this, together with \eqref{nabvu1}, ensures $V_d(q)$ has a critical point at the desired position $q_\star \in \rea^n$. Second, we make the following final assumption.
\begite
\item[{\bf A7.}]  The controller tuning gains $k_e,k_a,k_u \in \rea$, $K_D, K_I \in \rea^{m \times m}$, $K_I>0,K_D \geq 0$ are such that the matrix $M_d(q_u)$ defined in  \eqref{mdph} is positive definite and the function $V_d(q)$ defined in \eqref{vdph} has an isolated minimum at $q_\star \in \rea^n$.
\endite

We are in position to present the first main result of the note, whose proof follows from \eqref{dotu}, \eqref{uequhd} and standard Lyapunov stability theory. 

\begin{proposition} \em
\label{pro3}
Consider the underactuated mechanical system \eqref{lagr} satisfying Assumptions {\bf A1}--{\bf A4} and {\bf A6}, together with \eqref{utau} and the PID controller \eqref{pidcon1} and \eqref{yd1}, verifying Assumptions {\bf A5} and {\bf A7}, with $z_1(0)$ given in  \eqref{z10}. 
\begite
\item[(i)] The closed--loop system has a {\em stable} equilibrium at the desired point $(q,\dot q)=(q_\star,0)$ with Lyapunov function \eqref{tilhd} with $ M_d(q_u)$ and $ V_d(q_u)$ defined in \eqref{mdph} and   \eqref{vdph}, respectively.  
\item[(ii)] The equilibrium is  {\em asymptotically} stable if the signal $ y_d$ is a {\em detectable} output for the closed--loop system. 
\item[(iii)] The stability properties are {\em global} if $ V_d(q)$ is radially unbounded.
\endite
\end{proposition}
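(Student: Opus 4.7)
The plan is to exploit the identity \eqref{uequhd}, which says that along closed-loop trajectories with $z_1(0)$ fixed by \eqref{z10} the auxiliary function $U(q,\dot q,z_1)$ collapses to $H_d(q,\dot q)$, together with the dissipation inequality \eqref{dotu}. I would treat each of the three items in turn, since all three are routine specializations of classical Lyapunov/LaSalle arguments once the candidate function has been shown to behave correctly at $(q_\star,0)$.

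For item (i), I would first note that Assumption \textbf{A7} delivers precisely the two ingredients needed: $M_d(q_u)$ is positive definite, so the kinetic part of \eqref{tilhd} is positive definite in $\dot q$; and $V_d(q)$ has an isolated minimum at $q_\star$. The choice of $z_1(0)$ in \eqref{z10} was engineered so that the integral term in $V_d$ in \eqref{vdph} is centered exactly at $q_\star$ and vanishes there, while the condition $\nabla V_u(q_u^\star)=0$ from \eqref{nabvu1} makes $V_u$ critical at $q_u^\star$; together they make $q_\star$ a strict local minimizer of $V_d$. Hence $H_d(q,\dot q)-H_d(q_\star,0)$ is locally positive definite around $(q_\star,0)$, and from \eqref{uequhd}--\eqref{dotu} we obtain $\dot H_d=-\|y_d\|_{K_P}^2\le 0$. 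Standard Lyapunov theory then yields stability.

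For item (ii), I would appeal to LaSalle's invariance principle: Lyapunov stability from (i) implies boundedness of trajectories starting in a neighborhood of $(q_\star,0)$, so $\omega$-limit sets are contained in the largest invariant subset of $\{(q,\dot q):y_d(t)\equiv 0\}$. The detectability hypothesis on $y_d$ means by definition that any closed-loop trajectory on which $y_d$ vanishes identically converges to $(q_\star,0)$; combining this with (i) gives asymptotic stability. For item (iii), if in addition $V_d$ is radially unbounded then, together with positive definiteness of $M_d(q_u)$, the total Lyapunov function $H_d(q,\dot q)$ is radially unbounded on $\rea^n\times\rea^n$, so all its sublevel sets are compact and forward invariant, and the local conclusions of (i)--(ii) extend globally.

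The main conceptual hurdle—rather than any real technical obstacle—is justifying that $H_d$ is a \emph{bona fide} function of $(q,\dot q)$ alone; this is exactly what Assumption \textbf{A6} buys us, by letting us eliminate $z_1$ via \eqref{z11}, and why the initial condition \eqref{z10} is not a free tuning parameter but rather a structural requirement of the design. Once this identification is in place the proof is a direct invocation of the classical stability, LaSalle, and global Lyapunov theorems, so no further calculation beyond what has already been done in Subsection \ref{subsec42} is needed.
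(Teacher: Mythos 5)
Your proposal is correct and follows exactly the route the paper intends: the paper itself only remarks that the proof "follows from \eqref{dotu}, \eqref{uequhd} and standard Lyapunov stability theory," and your argument is a faithful elaboration of precisely those ingredients (positive definiteness of $H_d$ via Assumption {\bf A7} and the centering induced by \eqref{z10}, the dissipation identity $\dot H_d=-\|y_d\|^2_{K_P}$, LaSalle plus detectability for item (ii), and properness of $H_d$ from radial unboundedness of $V_d$ for item (iii)). No gaps; your closing observation that Assumption {\bf A6} and the fixed initial condition \eqref{z10} are what make $H_d$ a genuine function of $(q,\dot q)$ is exactly the point the paper emphasizes in Subsection \ref{subsec42}.
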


It may be argued that Proposition \ref{pro3} imposes a particular initial condition to the controller state $z_1$ making the result ``trajectory dependent" and somehow fragile. In this respect notice that fixing the initial condition of the integrator state is equivalent to  fixing a constant additive term in a {\em static} state-feedback implementation of the control---a common practice in nonlinear control designs. Moreover, no claim concerning the transient stability is made here, in which case the ``trajectory dependence" of the controller renders the claim specious. See Remark 10 and the corresponding sidebar of \cite{ORTPAN}.   

\section{Extensions}
\lab{sec5}
In this section we present the following two extensions to Proposition \ref{pro3}.
\begite
\item The proof that the cancellation of the potential energy term $V_a(q_a)$ of the preliminary feedback \eqref{utau} can be eliminated for a certain class of potential energy functions.
\item The relaxation  of the conditions imposed by Assumption {\bf A7} on the PID tuning gains.
\endite
%
\subsection{Removing the cancellation of $V_a(q_a)$}
\lab{subsec51}

As clear from the derivations above the key step for the design of the PID-PBC is to prove that, even without the cancellation of the term $V_a(q_a)$, the mappings  $\tau \mapsto   y_u$ and  $\tau \mapsto   y_a$ are {\em passive} with suitable storage functions. This fact is stated in the proposition below whose proof is given in Appendix B and requires the following assumption.

\begite
\item[{\bf A8.}]  The function $V_a(q_a)$ is of the form
\begin{equation}
\label{Vaqa}
V_a(q_a)=s_a^\top q_a + c_0,
\end{equation}
with $s_a \in \rea^m$ and $c_0 \in \rea$.
\endite
\begin{proposition} \em
\label{pro4}
Consider the underactuated mechanical system  \eqref{lagr} satisfying Assumptions {\bf A1}--{\bf A4}, {\bf A6} and {\bf A8}. The operators $\tau \mapsto   y_u$ and  $\tau \mapsto   y_a$ are {\em passive} with storage functions  
\begequarr
\bar H_u(q_u,\dot {q}_u) & := &  \hal \dot q_u^\top m_{uu}^s(q_u)\dot q_u + V_u(q_u) - V_0(q_u)
 \lab{newstofun1a} \\
\bar H_a(q,\dot{q}) & := & \hal  \dot q^\top  M_a(q_u) \dot q + V_a(q_a)+V_0(q_u),
\lab{newstofun2a}
\endequarr
where
\begequ
\label{V0}
V_0(q_u(t)) = s_a^\top  \int^t_0  y_u(s) ds + c_0,
\endequ
where $c_0 \in \rea$.
\end{proposition}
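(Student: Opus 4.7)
The plan is to reduce Proposition \ref{pro4} to Proposition \ref{pro1} by exploiting the affine structure of $V_a$. Under Assumption {\bf A8}, $\nabla V_a(q_a)=s_a$ is a constant vector, so the preliminary feedback $u=\tau-\nabla V_a(q_a)$ of Proposition \ref{pro1} reduces to a constant input shift $u=\tau-s_a$. Since $\nabla V_a$ appears only in the actuated block of the Euler--Lagrange equations, the un-actuated block is identical whether or not $V_a$ is cancelled. Hence the identities $\dot H_u=u^\top y_u$ and $\dot H_a=u^\top y_a$ of Proposition \ref{pro1} carry over verbatim to the un-cancelled system with $u$ replaced by $\tau-s_a$, yielding
\begin{equation*}
\dot H_u=\tau^\top y_u-s_a^\top y_u,\qquad \dot H_a=\tau^\top y_a-s_a^\top y_a.
\end{equation*}
My task is then to add corrections to $H_u$ and $H_a$ whose time derivatives absorb the parasitic $s_a$-terms, restoring passivity with respect to the original input $\tau$.

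The crucial step is to express the path integral $\int_0^t y_u(s)\,ds$ as a genuine function of the current configuration $q_u$. This is where Assumption {\bf A6} enters: by \eqref{dotvn} there exists a map $V_N$ with $\dot V_N=m_{aa}^{-1}m_{au}(q_u)\dot q_u$, so $y_u=-\dot V_N$ along trajectories. Hence $s_a^\top\int_0^t y_u(s)\,ds=-s_a^\top[V_N(q_u(t))-V_N(q_u(0))]$ depends only on the endpoints, and absorbing the initial-condition term into $c_0$ one obtains the well-defined configuration function $V_0(q_u)$ of \eqref{V0}, with $\dot V_0=s_a^\top y_u$ along every trajectory. This is the only place where the integrability of $m_{au}$ is invoked, and it is precisely what converts the parasitic term into the time derivative of a potential.

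Armed with $V_0$, the verification is a direct calculation. Adding a signed multiple of $V_0$ to $H_u$ cancels the $-s_a^\top y_u$ in $\dot H_u$ and yields $\dot{\bar H}_u=\tau^\top y_u$. For $\bar H_a$ one incorporates $V_a(q_a)$, whose derivative contributes $s_a^\top\dot q_a$, and then exploits the pointwise identity $\dot q_a=y_u+y_a$ (immediate from \eqref{outlag}) together with an oppositely signed $V_0$ correction; the $y_a$-contributions cancel against the residual $-s_a^\top y_a$ in $\dot H_a$, the remaining $s_a^\top y_u$ is absorbed by $V_0$, and one obtains $\dot{\bar H}_a=\tau^\top y_a$. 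The opposite signs of the $V_0$ corrections in $\bar H_u$ and $\bar H_a$ are forced by, and consistent with, the total energy balance $\bar H_u+\bar H_a=\hal\dot q^\top M(q_u)\dot q+V_u(q_u)+V_a(q_a)$, which must satisfy $\dot H=\tau^\top(y_u+y_a)=\tau^\top\dot q_a$ in the un-cancelled system.

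Finally, passivity in the usual sense requires that $\bar H_u$ and $\bar H_a$ be bounded from below. The kinetic pieces are nonnegative because $m_{uu}^s$ and $M_a(q_u)$ are positive (semi)definite as Schur complements of the positive-definite $M$; Assumption {\bf A4} provides the lower bound on $V_u$, {\bf A8} makes $V_a$ affine, and $V_0$ is a smooth function of $q_u$ whose contributions offset in the sum. The main obstacle I expect is the careful sign bookkeeping of the two $V_0$ corrections so that the cancellations in $\dot{\bar H}_u$ and $\dot{\bar H}_a$ realize \eqref{dotha}--\eqref{dothu} with $u$ replaced by $\tau$ simultaneously; the rest is an organized rerun of the derivation underpinning Proposition \ref{pro1}.
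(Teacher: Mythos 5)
Your proposal is correct and is essentially the paper's own argument (Appendix B): the un-cancelled system is the Proposition \ref{pro1} system under the constant input shift $u=\tau-s_a$, Assumption {\bf A6} makes $s_a^\top\int_0^t y_u(s)\,ds$ a function of $q_u$ alone (via $y_u=-\dot V_N$), and oppositely signed $V_0$-corrections to the two storage functions---consistent with the total energy balance $\bar H_u+\bar H_a=\hal\dot q^\top M\dot q+V_u+V_a$---absorb the parasitic $s_a^\top y_u$ and $s_a^\top y_a$ terms. The sign bookkeeping you single out as the delicate spot is in fact where the paper itself slips: since $\dot H_u=\tau^\top y_u-s_a^\top y_u$ while \eqref{V0} gives $\dot V_0=+s_a^\top y_u$, the corrections that actually work are $\bar H_u=H_u+V_0$ and $\bar H_a=\hal\dot q^\top M_a\dot q+V_a-V_0$ (equivalently, replace $s_a$ by $-s_a$ in \eqref{V0}), i.e., the signs opposite to those displayed in \eqref{newstofun1a}--\eqref{newstofun2a}.
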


In contrast to Proposition \ref{pro3}, the proposition above does not include the nonlinearity cancellation due to the feedback \eqref{utau}. On the other hand, we impose Assumption {\bf A8} and the integrability Assumption {\bf A6}---the latter is, in any case, required for the Lyapunov stability analysis of the closed-loop. Notice also that, under Assumption {\bf A6}, $V_0(q_u)$ is well-defined.

\subsection{Convergence analysis via LaSalle's invariance principle}
\lab{subsec52}
%
In this subsection we remove Assumption {\bf A7} and we perform a convergence analysis using the following assumptions.

\begite
\item[{\bf A9.}] The system is {\em strongly inertially coupled} \cite{SPO}, that is,
$$
\rank[m_{au}(q_u)]=s,
$$ 
and the function $\nabla V_u(q_u)$ is {\em injective}.
\endite 

As shown in the proof of the proposition below, which is given in Appendix C, Assumption {\bf A9} is required to complete the convergence analysis.

\begin{proposition} \em
\label{pro5}
Consider the underactuated mechanical system \eqref{lagr} verifying Assumptions {\bf A1}-{\bf A4}, {\bf A6} and {\bf A9} in closed-loop with \eqref{utau} and the PID controller \eqref{pidcon1} and \eqref{yd1},  verifying the well-posedness Assumption {\bf A5}, with $z_1(0)$ given in  \eqref{z10}. All {\em bounded} trajectories of the closed loop system verify 
$$
\lim_{t \to \infty}\lef[{c}q(t) \\ \dot q(t) \rig]=\lef[{c} q_\star \\ 0 \rig].
$$
\end{proposition}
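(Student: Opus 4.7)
\textbf{Proof plan for Proposition \ref{pro5}.}

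My plan is to apply LaSalle's invariance principle to the function $U(q,\dot q,z_1)$ of \eqref{u}, which under the initial-condition choice \eqref{z10} is expressible, via \eqref{z11}, as the autonomous function $H_d(q,\dot q)$ of \eqref{tilhd}, and to then peel apart the invariant set one constraint at a time. First I would observe that, since $(q,\dot q)$ is bounded by hypothesis, $z_1$ is bounded via \eqref{z11}; hence $H_d$ is bounded along the trajectory, and \eqref{dotu} shows it is non-increasing. The closed-loop system being autonomous and the trajectory precompact, LaSalle's theorem places the $\omega$-limit set $\mathcal{M}$ of any bounded trajectory inside the largest invariant subset of $\{(q,\dot q):y_d=0\}$.

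The bulk of the work is then to show $\mathcal{M}=\{(q_\star,0)\}$. On $\mathcal{M}$, $y_d\equiv 0$ yields $\dot z_1=0$ and, since also $\dot y_d=0$, equation \eqref{pidcon1} collapses to $k_e u\equiv -K_I z_1$, so $u$ is a constant $u_\infty$. Differentiating \eqref{yd1}, multiplying by $m_{aa}$, and subtracting an appropriate multiple of the actuated equation \eqref{maaddqa1} to cancel the quadratic combination $m_{au}(q_u)\ddot q_u+\nabla_{q_u}(m_{au}(q_u)\dot q_u)\dot q_u$ (using that this combination equals $\tfrac{d}{dt}[m_{au}(q_u)\dot q_u]$) produces
\[
\ddot q_a\equiv -\frac{k_a-k_u}{k_u}\,m_{aa}^{-1}u_\infty,
\]
so $\ddot q_a$ is constant on $\mathcal{M}$.

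From this the rest is a sequence of boundedness arguments. Boundedness of $q_a$ forces $\ddot q_a=0$, which, since $k_a\neq k_u$ and $m_{aa}$ is invertible, gives $u_\infty=0$ and consequently $z_1\equiv 0$. Boundedness of $q_a$ with $\ddot q_a=0$ forces $\dot q_a\equiv 0$, so $y_d=0$ collapses to $m_{au}(q_u)\dot q_u=0$; the full column rank of $m_{au}(q_u)$ assumed in {\bf A9} then gives $\dot q_u=0$, hence $\dot q\equiv 0$ and $q$ is constant on $\mathcal{M}$. Substituting $\dot q=0$, $\ddot q=0$ into the unactuated equation \eqref{muuddqu1} and using $D_{mu}(q_u,0)=0$ from \eqref{dmu1} yields $\nabla V_u(q_u)=0$, and the injectivity part of {\bf A9} gives $q_u=q_u^\star$. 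Finally, $z_1=0$ read against \eqref{z11}--\eqref{z10} yields $k_a(q_a-q_a^\star)=0$, hence $q_a=q_a^\star$.

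The main obstacle is the algebraic step producing the constancy of $\ddot q_a$: differentiating the passivity output constraint and using \eqref{maaddqa1} to eliminate the $m_{au}\ddot q_u+\nabla_{q_u}(m_{au}\dot q_u)\dot q_u$ combination requires careful bookkeeping of the factors $k_a$, $k_a-k_u$, and $k_u$, and it is essential that the same quadratic term appears in both identities with the correct coefficients. Once this is in hand the remaining deductions are routine invocations of boundedness and of the rank-plus-injectivity content of {\bf A9}.
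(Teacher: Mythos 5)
Your proposal is correct and follows essentially the same route as the paper's Appendix C: LaSalle on $H_d$, the identity $0=\dot y_d=(k_a-k_u)m_{aa}^{-1}u+k_u\ddot q_a$ on the invariant set (your $\ddot q_a=-\tfrac{k_a-k_u}{k_u}m_{aa}^{-1}u_\infty$ is exactly this), boundedness forcing $\ddot q_a=\dot q_a=0$, full column rank of $m_{au}$ giving $\dot q_u=0$, and injectivity of $\nabla V_u$ pinning $q_u=q_u^\star$. The only cosmetic difference is that you close via $u_\infty=0\Rightarrow z_1=0$ and the explicit expression \eqref{z11}, whereas the paper deduces $u=0$ from \eqref{maaddqa} at the constant point and then invokes \eqref{uatequ}; these are equivalent since $k_eu=-K_Iz_1$ on the residual set.
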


It should be underscored that Assumption {\bf A7}, which imposes constraints on the PID-PBC gains to ensure positive definiteness of the function $H_d(q,\dot q)$---and, consequently, Lyapunov stability of the desired equilibrium---is conspicuous by its absence. The prize that is paid for this relaxation  is that there is no guarantee that trajectories remain bounded. However, there are cases where boundedness of trajectories  can be established invoking other (non Lyapunov-based) considerations---see, for instance, the proof of Proposition 9 in \cite{VENetal}.  

On the other hand, we impose stronger conditions on the coupling between the actuated and unactuated dynamics captured in Assumption {\bf A9}. As discussed in \cite{SPO} the condition strong inertial coupling is, essentially, a controllability condition and it requires that $s\le m$, {\em i.e.}, the number of actuated coordinates is larger or equal than the unactuated ones.  In Exercise {E10.17} of \cite{BULLEW} it is shown that this assumption is not coordinate-invariant nor is it related to stabilisability of the system. In the sense that there are strongly inertially coupled systems that cannot be stabilised using any kind of feedback. The assumption of injectivity of $\nabla V_u(q_u)$ seems, unfortunately, unavoidable without imposing conditions on $V_d(q)$.
  
Propositions \ref{pro4} and \ref{pro5} can be combined yielding a robust controller---that does not cancel the term $V_a(q_a)$---and does not require Assumption {\bf A7}. This case is omitted for brevity.
%
\section{EXAMPLES}
\lab{sec6}
%
In this section we apply the proposed PID-PBC  to linear mechanical systems and the well-known cart-pendulum on an inclined plane system. 
\subsection{Linear mechanical systems}
\lab{subsec61}
%
For linear mechanical systems verifying Assumption {\bf A1} the dynamical model  \eqref{lagr} reduces to
\begequ 
\lab{lti}
M \ddot{q} + S q =\lef[{c} 0_{{s}\times m} \\ I_m  \rig] \tau,
 \endequ
 where $M>0$ is constant and $S=S^\top$ defines the quadratic potential energy $V(q)=\hal q^\top S q$. Assumptions {\bf A2} and {\bf A3} are, clearly, satisfied. To comply with Assumption {\bf A4} the matrix $S$ is of the form
 $$
 S=\lef[{cc} S_u & 0_{s \times m} \\ 0_{m \times s} & S_a \rig]
 $$
 where  $S_u \in \rea^{s \times s}, S_a \in  \rea^{{m}\times m}$.    The inner-loop control \eqref{utau} is given as $u=\tau-S_a q_a$.  Replacing this signal into \eqref{lti} and using the fact that $S_u q_u^\star=0$ we get
 $$
 M \ddot{\tilde q} + \lef[{cc} S_u & 0_{s \times m} \\ 0_{m \times s} & 0_{m \times m} \rig] \tilde q =\lef[{c} 0_{{s}\times m} \\ I_m  \rig] u,
 $$
 where  $\tilde q:=q - q^\star$ are the position errors.
 
 Some simple calculations show that the PID-PBC \eqref{pidcon1}, \eqref{yd1} may be written as
 $$
k_e  u = - (K_D s^2 + K_P s + K_I)  \left[m_0\tilde q_u + {k_a} \tilde q_a \right],
 $$
where we defined the matrix 
$$
m_0:={(k_a - k_u)}m_{aa}^{-1}m_{au} \in \rea^{m \times s},
$$
and---abusing notation---we mix the Laplace transform and time domain representations. Notice that the constant term $\kappa$ of   \eqref{pidcon1}, which is retained only in the integral term of the control, is incorporated into the definition of the error signals. 
 
The final closed-loop system may be written as
 $$
 \left\{ \lef[{cc} m_{uu} &  m^\top_{au} \\  m_{au}+ {1 \over k_e} K_D m_{0} &{k_a \over k_e}  K_D  \rig]s^2
+  \lef[{cc}  0_{s \times s} &  0_{s \times m}\\ {1 \over k_e} K_P m_{0} &{k_a \over k_e}  K_P  \rig]s
+  \lef[{cc} S_u &  0_{s \times m}\\ {1 \over k_e} K_I m_{0} &{k_a \over k_e}  K_I  \rig] 
 \right\} \tilde q=0.
 $$ 
The closed-loop system will be asymptotically stable {\em if and only} if the determinant of the polynomial matrix in brackets is a Hurwitz polynomial. 

The Lyapunov function \eqref{tilhd} used in Proposition \ref{pro3} is of the form
$$
H_d(\tilde q, \dot {\tilde q})= \frac{1}{2}   \dot {\tilde q}^\top  M_d \dot {\tilde q}+ \hal \tilde q^\top  \lef[{cc} k_e k_u S_u + m_0^\top K_I m_0 &  k_a  m_0^\top K_I\\ k_a  K_I m_{0} & k_a^2 K_I  \rig] \tilde q, 
$$
with the constant matrix $M_d$ given by  \eqref{mdph}. Positivity of this function is, clearly, only {\em sufficient} for asymptotic stability of the closed-loop system.

\subsection{Cart-pendulum on an inclined plane}
\lab{subsec62}
%
In this subsection we consider the cart-pendulum on an inclined plane system depicted in Fig. \ref{fig1}. The objective is to stabilize a desired position of the cart as well as the pendulum at the upright position applying the PID-PBC of Proposition \ref{pro4}.
 
\begin{figure}[htp]
 \centering
\includegraphics[width=0.69\linewidth]{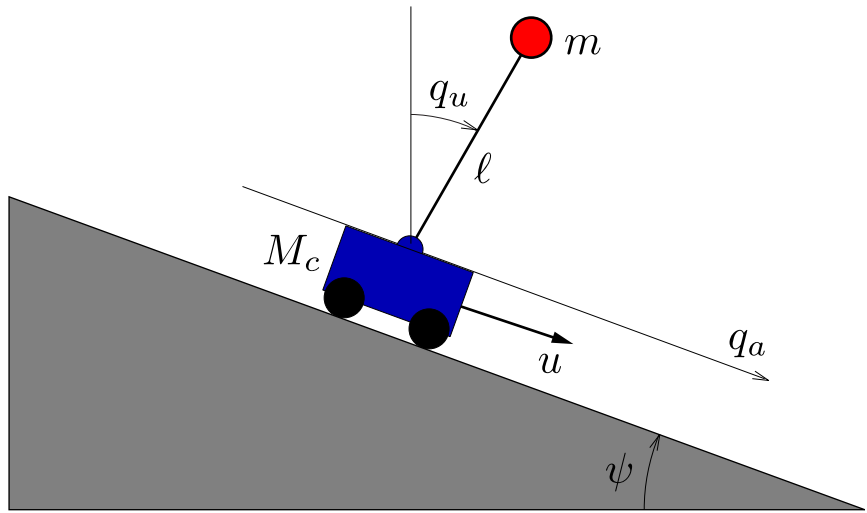}
 \caption{The cart-pendulum on an inclined plane.}
 \lab{fig1}
\end{figure}

The dynamics of the system has the form \eqref{lagr} with $n=2$, $q_a$ the position of the car, $q_u$ the angle of the pendulum with respect to the up-right vertical position and $u$ a force on the cart.  The inertia matrix is
\begin{equation*}
 M (q_u)= \left[ \begarr{cc} m \ell^2 & m \ell \cos(q_u-\psi) \\ m \ell \cos(q_u-\psi) & M_c+m \endarr \right],
 \end{equation*}
with  $M_c,m$ the masses of the cart and pendulum, respectively, $\ell$ the pendulum length and $\psi$ the angle of inclination of the plane. The potential energy function is
$$
V (q)= mg\ell \cos(q_u) - (M_c+m)g\sin(\psi)q_a,
$$
and the input matrix is $G=\col(0,1)$. The desired equilibrium is $(0, q_a^\star)$ with $q_a^\star \in \rea$, which is the only constant assignable equilibrium point.

This system clearly satisfies Assumptions {\bf A1-A4} and {\bf A8} with $s_a= -(M_c+m)g\sin(\psi)$ and $c_0=0$.  
Applying Proposition \ref{pro1} we identify the cyclo--passive outputs as
\begin{align*}
 y_a &= \dot q_a + \frac{m\ell}{M_c+m} \cos(q_u-\psi) \dot q_u \\
 y_u &= - \frac{m\ell}{M_c+m} \cos(q_u-\psi) \dot q_u. 
\end{align*}
The signal $y_d$ defined in \eqref{yd} takes the form
$$
 y_d=k_a \dot q_a + (k_a-k_u) \frac{m\ell}{M_c+m} \cos(q_u-\psi) \dot q_u.
$$
Assumption {\bf A6} is also satisfied with 
$$
 V_N(q_u)=  \frac{m\ell}{M_c+m} \sin(q_u-\psi).
$$
Finally, from Proposition \ref{pro2} the PID controller is given by \eqref{pidcon1} where the integral term \eqref{z11} takes the form 
$$
z_1 =  k_aq_a+(k_a-k_u) \frac{m\ell}{M_c+m} \sin(q_u-\psi)+\kappa,
$$
and
\begin{align*}
S(q,\dot q)& =  -k_u K_D \Bigg\{ -\frac{m\ell}{M_c+m} \sin(q_u-\psi) \dot q_u^2  +\mathcal{N}(q_u) \left[ -m\ell \sin(q_u-\psi) \dot q_u^2 - (M_c+m) g \sin(\psi) \right]  \nonumber \\
   +& \frac{m^2\ell^2 g}{M_c+m} (m_{uu}^s)^{-1} \cos(q_u-\psi)  \sin(q_u) \Bigg\} +  k_aK_D g \sin(\psi), \\
\mathcal{N}(q_u) &= \frac{m \cos^2 (q_u-\psi)}{(M_c+m) \left[M_c+m-m\cos^2(q_u-\psi) \right]},\\
K(q_u) &= - \frac{k_u K_D m \left[\cos(q_u-\psi)\right]^2}{(M_c+m)\left(M_c+m \left[\sin(q_u-\psi)\right]^2\right)} +k_e+\frac{k_aK_D}{(M_c+m)},  \\
\kappa &= -k_aq_a^\star +(k_a-k_u) \frac{m\ell \sin(\psi)}{M_c+m}. 
\end{align*}

The  parameters and initial conditions used in the simulations  have been chosen according to \cite{BLOetal} and they are given as follows:  $m= 0.14$ $kg$, $M_c=0.44$ $kg$, $l=0.215$ $m$,  $\psi=20$ degrees,  $q(0)=(20$deg$,-0.6$m) and $\dot q(0)=0$. The desired equilibrium is set to $q_a^\star=0$ $m$ for $t \in (0,5)$ and to $q_a^\star=-0.3$ $m$ for $t \in (5,10)$, with $q_u^\star=0$ always.

The gains of the PID-PBC \eqref{pidcon1} are chosen  as  $K_D=0.1$, $K_P=1$ and $K_I=2$. Using these values we present three set of simulations where we change one-by-one the gains $k_a$, $k_u$ and $k_e$ while keeping the PID-PBC gains unaltered---always satisfying  Assumptions {\bf A5} and {\bf A7}. Variations of the PID-PBC gains were also considered but their effect was less informative than changing the gains $k_a,k_u$ and $k_e$ In all cases we present the transient behavior of $q$, $\dot q$, $u$ and the factor $K(q_u)$.  Figs. \ref{q_ka}-\ref{uK_ka} correspond to variations of $k_a$ with $k_u=-500$ and $k_e=5$.  In Figs. \ref{q_ku}-\ref{uK_ku} we change now $k_u$ with  $k_a= 50$ and $k_e=5$. Finally, Figs. \ref{q_ke}-\ref{uK_ke} correspond to variations of $k_e$ with $k_a= 50$ and $k_u=-500$. In all cases, the desired regulation objective is achieved very fast with a reasonable control effort. 

These plots should be compared with Fig. 5 in  \cite{BLOetal} where the transient peaks are much larger and they take over $100$ $s$ to die out. Unfortunately, the plots of the control signal are not shown in  \cite{BLOetal}, but given the magnitudes selected in the controller it is expected to be much larger than the ones resulting from the PID-PBC.

\begin{figure}[htp]
 \centering \hspace{-1.1cm}
\includegraphics[width=1.06\linewidth]{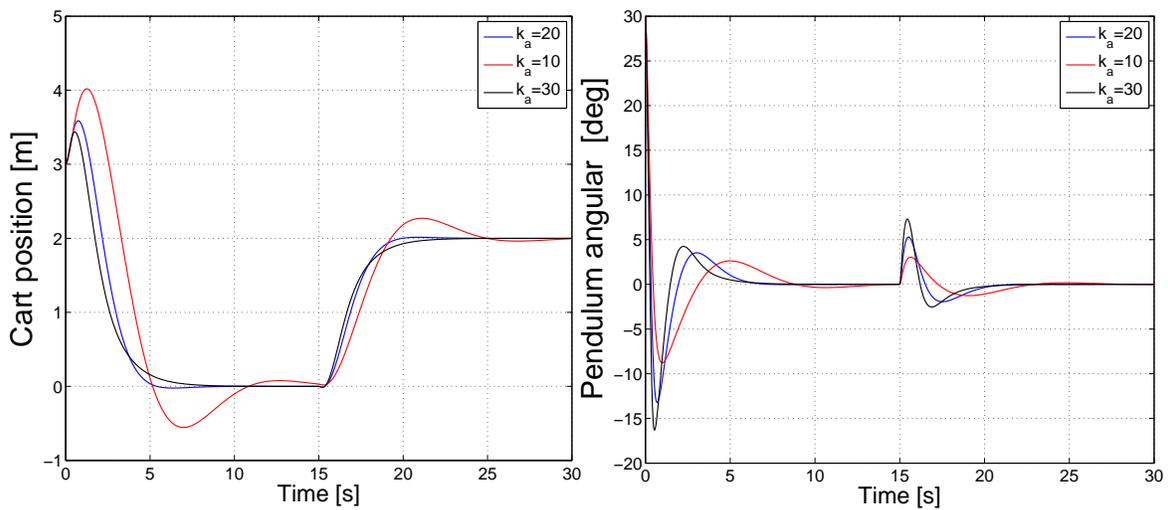}
 \caption{Time histories of the position of the cart $q_a(t)$ and angle of the pendulum $q_u(t)$.}
 \label{q_ka}
\end{figure}

\vspace{-.7cm}
\begin{figure}[htp]
 \centering \hspace{-1.1cm}
\includegraphics[width=1.06\linewidth]{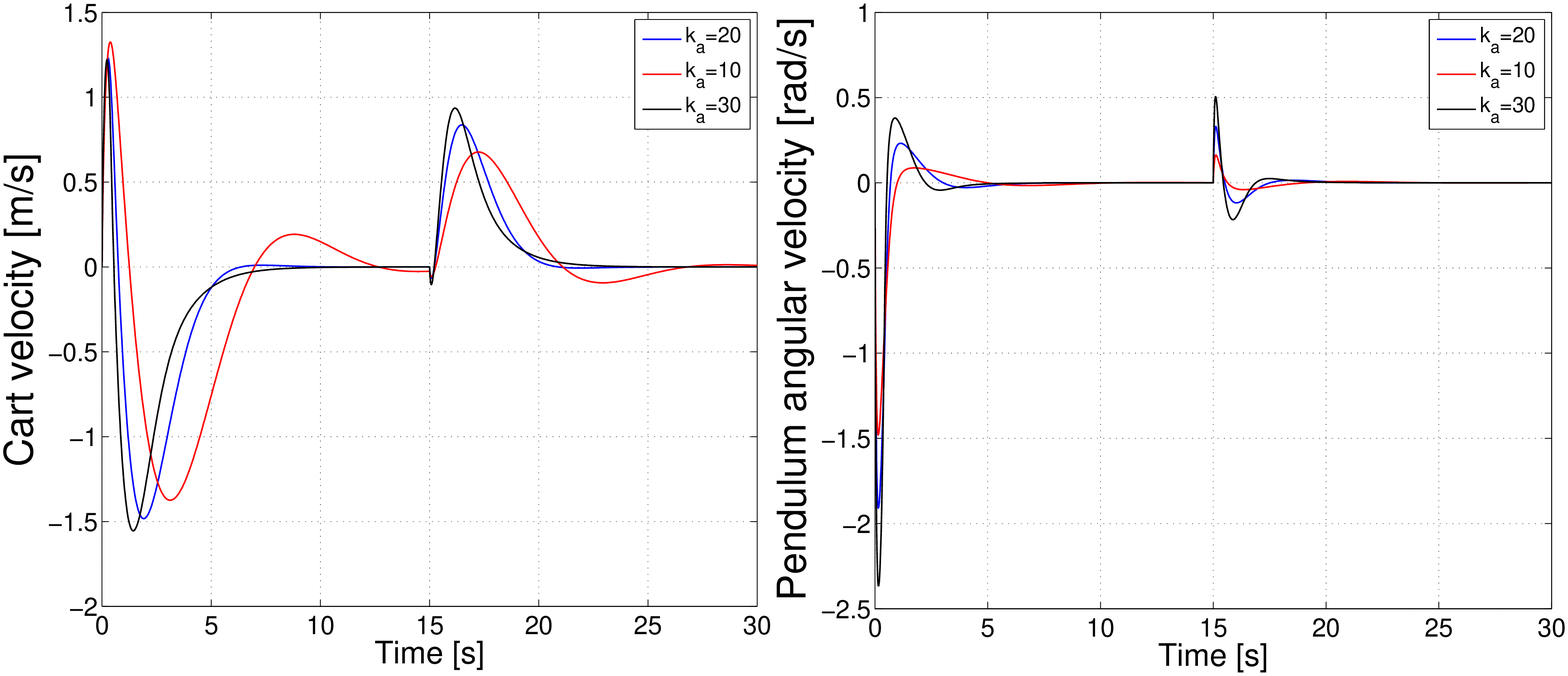}
 \caption{Time histories of the velocity of the cart $\dot q_a(t)$ and angular velocity of the pendulum $\dot q_u(t)$.}
 \label{dq_ka}
\end{figure}

\vspace{-.9cm}
\begin{figure}[htpi]
 \centering \hspace{-1.1cm}
\includegraphics[width=1.06\linewidth]{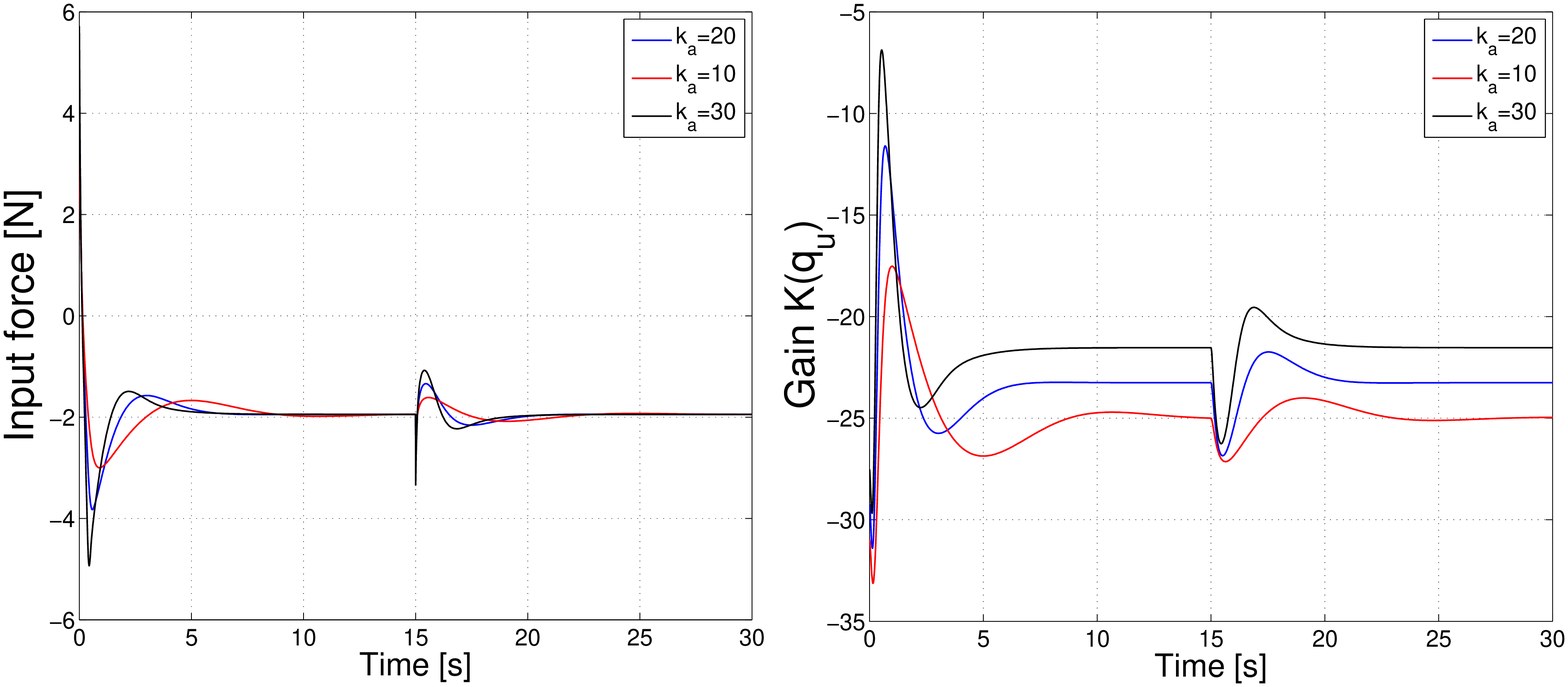}
 \caption{Time histories of the input force $u(t)$ and the nonlinear gain $K(q_u)$.}
 \label{uK_ka}
\end{figure}

\vspace{-1.2cm}

\begin{figure}[htpi]
 \centering \hspace{-1.1cm}
\includegraphics[width=1.06\linewidth]{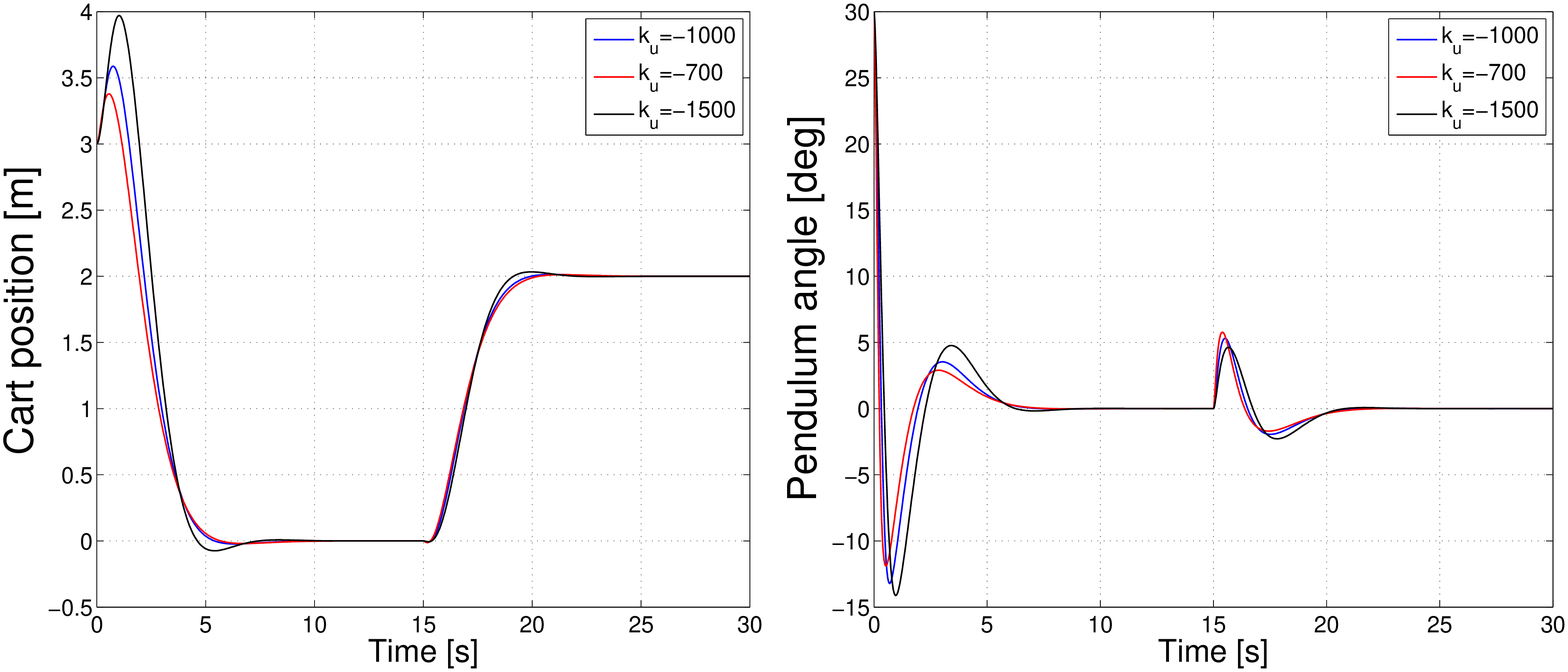}
 \caption{Time histories of the position of the cart $q_a(t)$ and angle of the pendulum $q_u(t)$.}
 \label{q_ku}
\end{figure}

\begin{figure}[htpi]
 \centering \hspace{-1.1cm}
\includegraphics[width=1.04\linewidth]{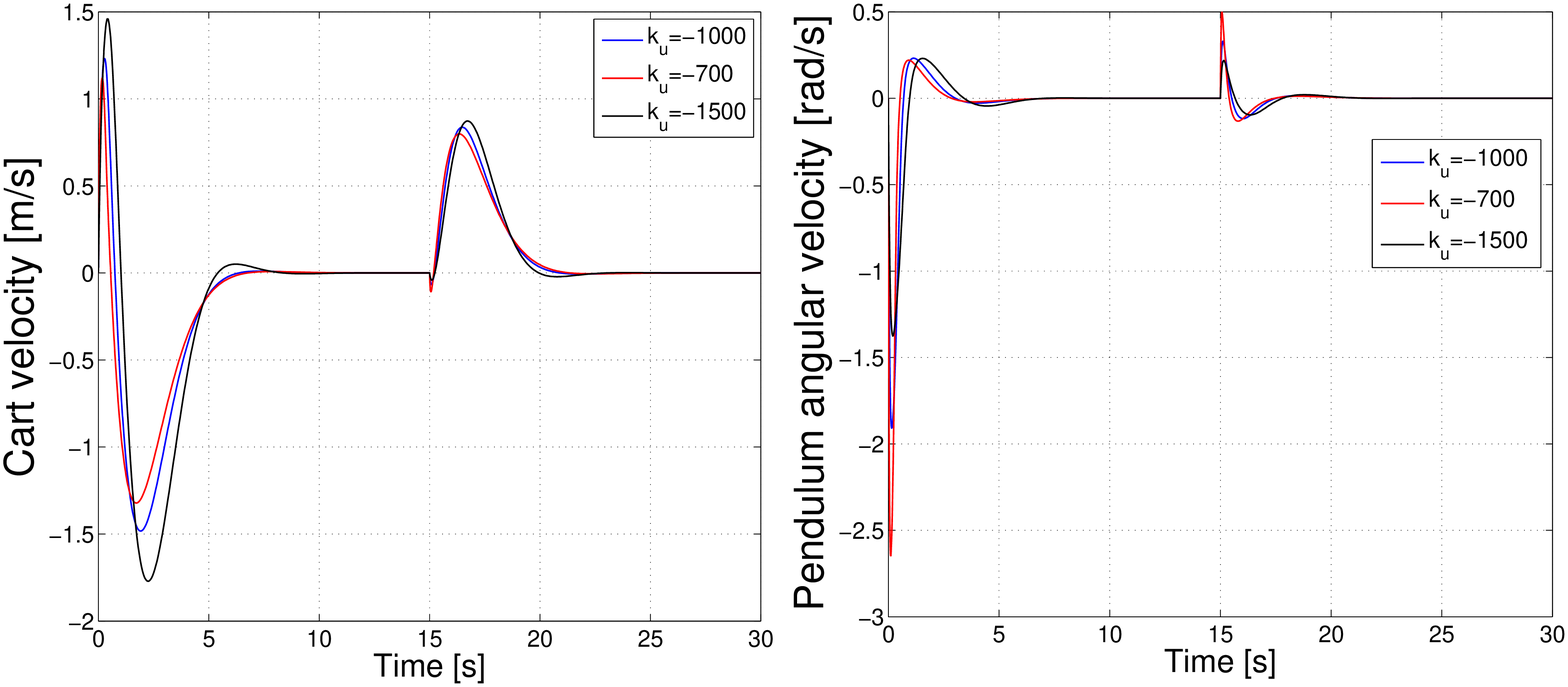}
 \caption{Time histories of the velocity of the cart $\dot q_a(t)$ and angular velocity of the pendulum $\dot q_u(t)$.}
 \label{dq_ku}
\end{figure}

\begin{figure}[htpi]
 \centering \hspace{-1.1cm}
\includegraphics[width=1.06\linewidth]{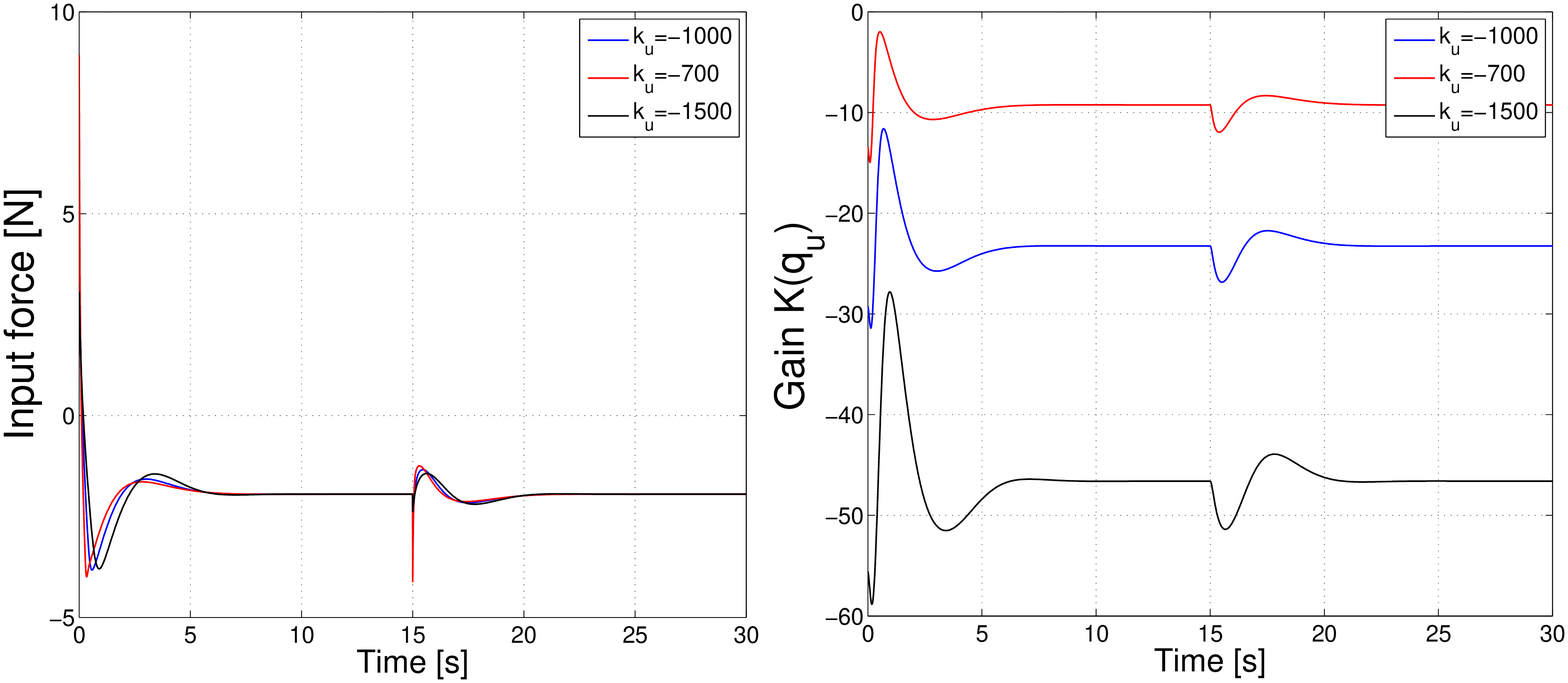}
 \caption{Time histories of the input force $u(t)$ and the nonlinear gain $K(q_u)$.}
 \label{uK_ku}
\end{figure}


\begin{figure}[htpi]
 \centering  \hspace{-1.1cm}
\includegraphics[width=1.04\linewidth]{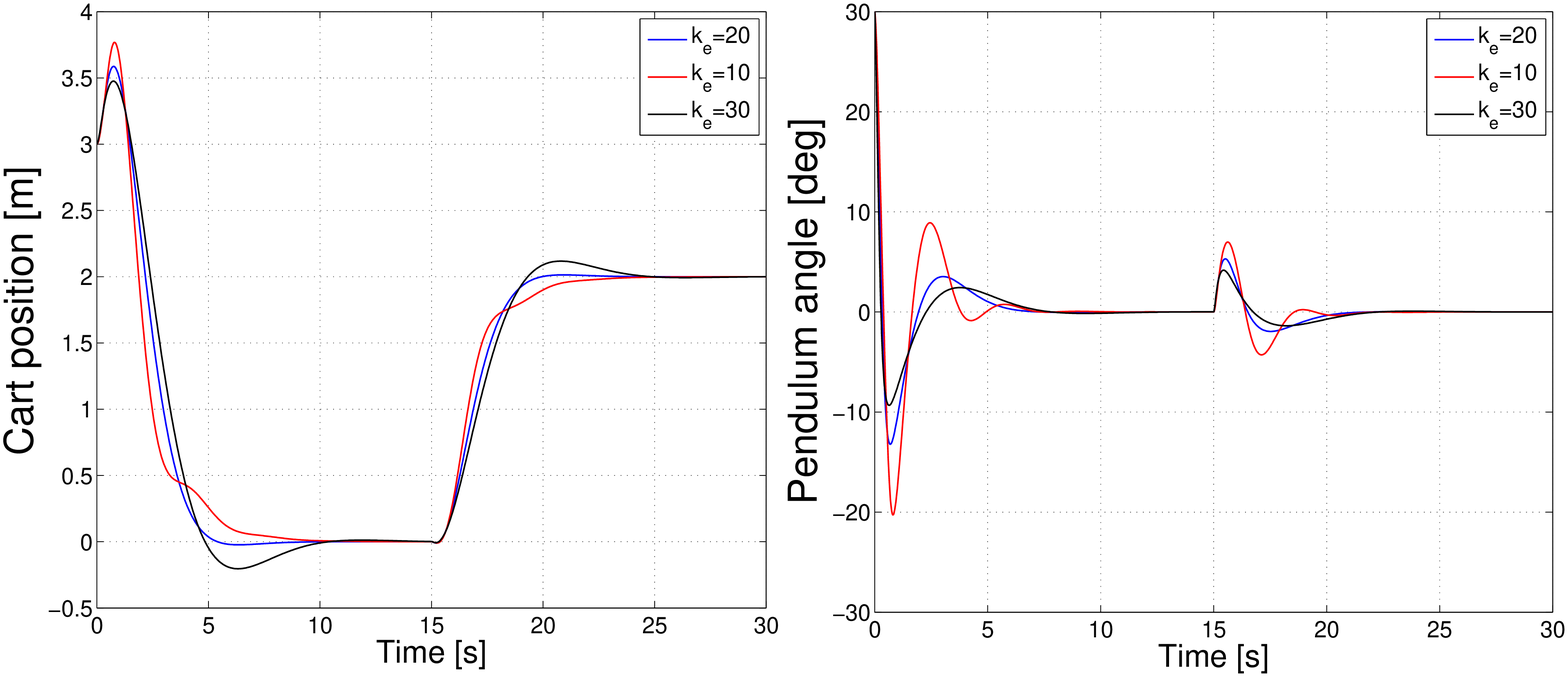}
 \caption{Time histories of the position of the cart $q_a(t)$ and angle of the pendulum $q_u(t)$.}
 \label{q_ke}
\end{figure}

\begin{figure}[htpi]
 \centering  \hspace{-1.1cm}
\includegraphics[width=1.06\linewidth]{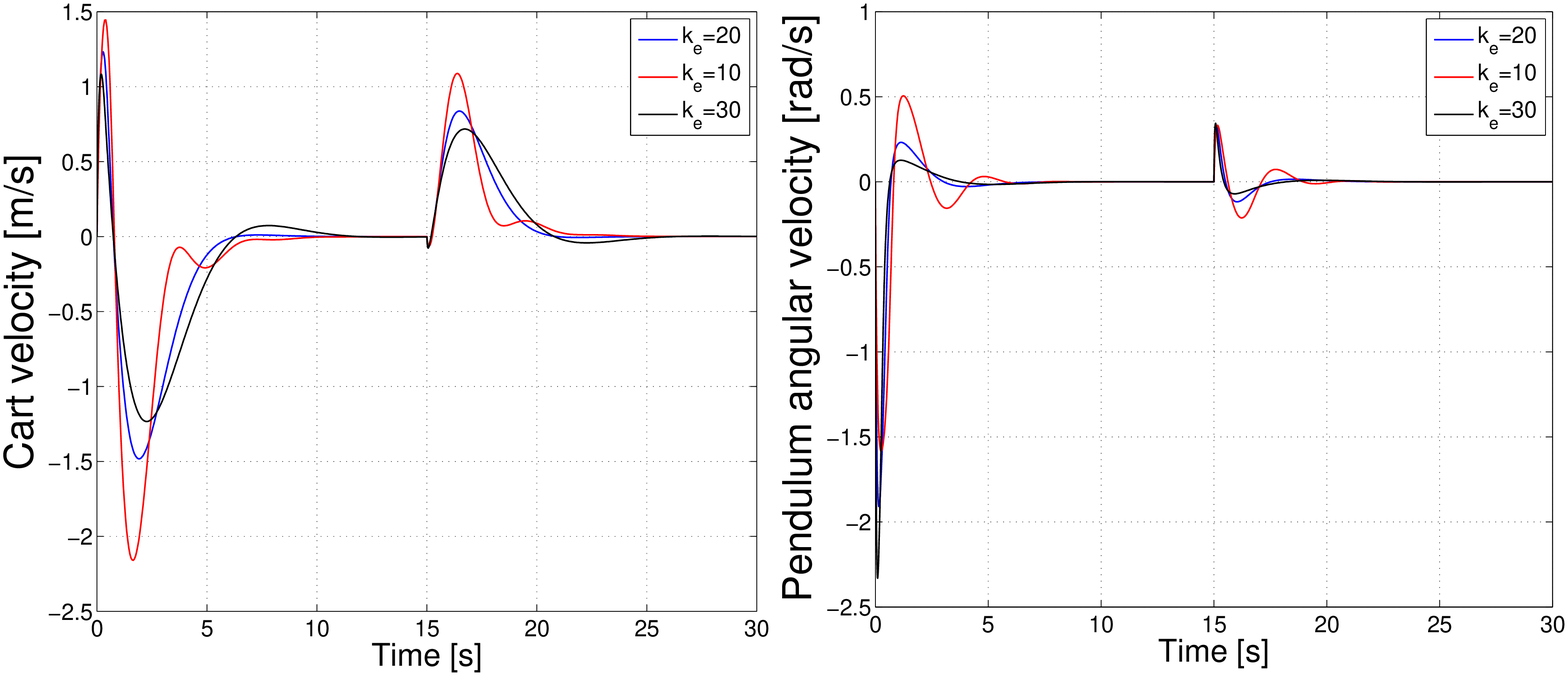}
 \caption{Time histories of the velocity of the cart $\dot q_a(t)$ and angular velocity of the pendulum $\dot q_u(t)$.}
 \label{dq_ke}
\end{figure}

\begin{figure}[htpi]
 \centering \hspace{-1.1cm}
\includegraphics[width=1.06\linewidth]{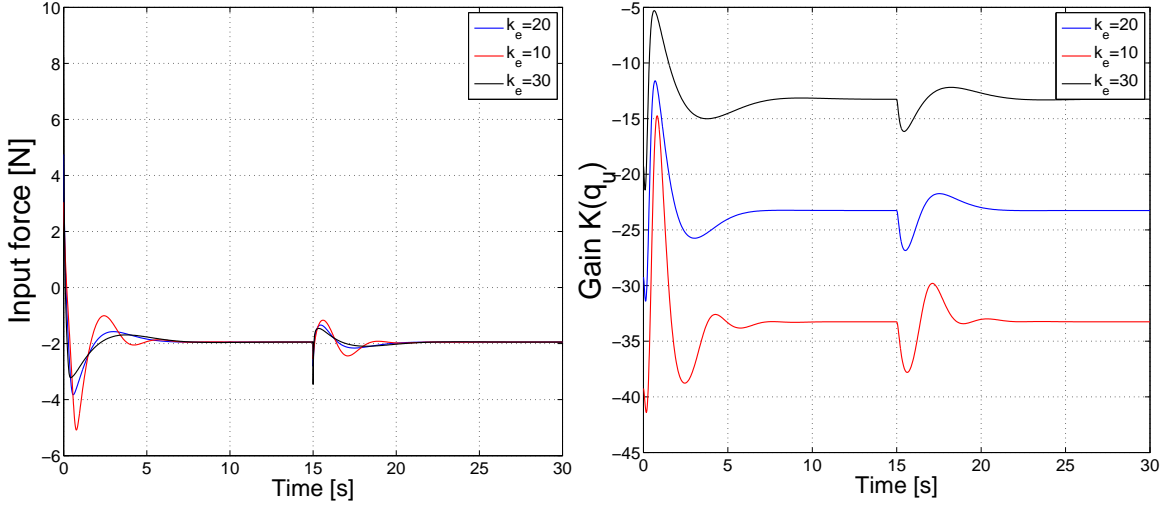}
 \caption{Time histories of the input force $u(t)$ and the nonlinear gain $K(q_u)$.}
 \label{uK_ke}
\end{figure}

\vspace{1.8cm}
Finally, we present in Fig. \ref{iAnimation} a serie of captures of a video animation of the cart-pendulum with initial conditions $(q(0),\dot q(0))= (20$deg, -$0.6$m), $\dot q (0)=0$ and desired equilbirum at the origin. The controller gains were selected as follows: $k_a= 50$, $k_u=-450$, $k_e=5$, $K_D=0.1$, $K_P=1$ and $K_I=2$. As it can be seen in the animation the PID-PBC ensures very good performance while satisfying Assumptions {\bf A5} and {\bf A7}.


\begin{figure}[ht]
 \centering \hspace{-5mm}
\includegraphics[width=.91\linewidth]{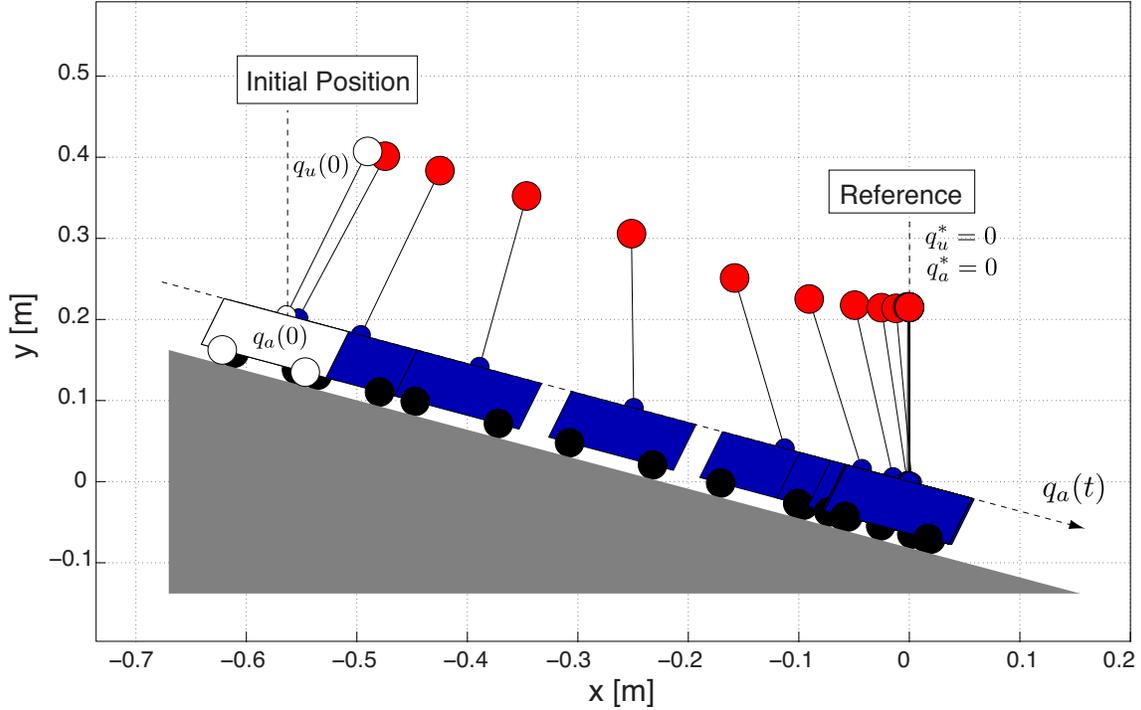}
\caption{Captures of a video animation of the cart-pendulum on an inclined plane. The full-video animation can be watched in {\tt https://youtu.be/CGInoXkR0FA}.}
\label{iAnimation}
\end{figure}

%
\section{Conclusions and Future Research}
\lab{sec7}
%
We have identified in this paper a class of underactuated mechanical systems whose constant position can be stabilized with a linear PID controller. It should be underscored that in view of the freedom in the choice of the {\em signs} of the constants entering into the control design, {\em i.e.}, $k_e,k_a$ and $k_u$, the proposed PID is far from being standard. Given the popularity and simplicity of this controller, and the fact that the class contains some common benchmark examples, the result is of practical interest. Moreover, from the theoretical view point, the {\em performance} of the PID controllers sometimes parallels the one of total energy shaping controllers like IDA PBC. For instance, it has been proved that in some benchmark examples (like the cart-pendulum system and the inertia wheel),  the desired equilibrium has the same estimated domain of attraction for both designs \cite{DONetal,ROMetal}.  See also the example of Subsection \ref{subsec62} where it is shown, via simulations, that the transient performance of the PID-PBC is far superior to the one of the total energy shaping controller reported in \cite{BLOetal}. 

Besides the usual Lyapunov stability analysis, that imposes some constraints on the PID tuning gains to shape the energy function, a LaSalle-based study of attractivity has been carried out under strictly weaker conditions on these gains, but imposing the stronger Assumption {\bf A9} on the system. An additional contribution of the paper is the proof that it is possible to obviate the cancellation of the actuated part of the potential energy, provided this is described by an affine function---as indicated in Assumption {\bf A8}. \\

\vspace{.73cm}
Current research is under way along the following directions.
\begite
\item To replace  Assumption {\bf A8}, which is rather restrictive, by some integrability-like condition that is verified in some practical examples. 
\item Investigate alternatives to the classical LaSalle analysis used in the proof  of Proposition \ref{pro5} to relax the restrictive assumption of injectivity of $\nabla V_u(q_u)$, {\em e.g.}, invoking the Matrosov-like theorems of \cite{SCAetal}. Also, to identify classes of systems for which it is possible to prove boundedness of trajectories without Lyapunov stability. 
\item The theoretical analysis of the practical implementation of the PID using an approximate differentiator \eqref{appdif} seems feasible using singular perturbation arguments. However, as usual with this approach, the resulting results might be too conservative to be of practical interest.
\item It is necessary to get a better understanding---hopefully in some geometric or coordinate-free terms---of the class of systems verifying the key Assumptions {\bf A1-A4}.  
\item To further explore the relationship between total energy shaping PBC and the proposed PID the following two questions should be explored.
\begite
\item Compare their transient performances, for instance, investigating the flexibility to locate the eigenvalues of their tangent approximations---as done in \cite{KOT} for IDA-PBC. 
\item In \cite{DONORTROM} it has been shown that the PID controller of \cite{DONetal} can be recasted as a classical  IDA-PBC with {\em generalized dissipative forces}. What can be said about the nature of these forces? 
\endite
\endite
%
%

%
\section*{A. Proof of Proposition \ref{pro1}}

The first step in the proof is to invoke Assumptions {\bf A1-A4}  to rewrite the system \eqref{lagr} in a more suitable form. Towards this end, we recall the following well--known identity  given in eq. 3.19, page 72 of \cite{KELbook}  
\begequ
\lab{cdotq}
C(q,\dot{q}) \dot{q} = \left[ \nabla_q(M(q)\dot{q}) -\frac 12 \nabla_q^\top (M(q) \dot{q}) \right] \dot{q} ,
\endequ
Now, under  Assumptions {\bf A2} and {\bf A3},  the second function within brackets of the left hand side  of \eqref{cdotq}  takes the form
\begin{equation*}
\label{naM}
\nabla_q(M\dot{q}) = \left[ \begarr{cc} \nabla_{q_u}(m_{au}^\top \dot{q}_a + m_{uu} \dot{q}_u)   & 0_{s \times m} \\ \nabla_{q_u}( m_{au} \dot{q}_u) & 0_{m \times m}    \endarr \right],
\end{equation*}
where here, and throughout the rest of the proof, to simplify the notation, the arguments of some mappings will be omitted. Hence, using    \eqref{naM}, the vector  \eqref{cdotq} can be rewritten as follows 
\begin{eqnarray*}
C\dot q &=& \left[ \begarr{cc} \nabla_{q_u} (m_{au}^\top \dot{q}_a  + m_{uu} \dot{q}_u) - \frac 12 \nabla^\top_{q_u}(m_{au}^\top \dot{q}_a   +m_{uu} \dot{q}_u)  & - \frac 12 \nabla^\top_{q_u} (m_{au} \dot{q}_u)   \\ \nabla_{q_u}( m_{au} \dot{q}_u)  &  
0_{m \times s}   \endarr \right] \dot q \nonumber \\
 &=& \left[ \begarr{c}    \nabla_{q_u}(m_{uu} \dot{q}_u)\dot q_u- \frac 12 \nabla^\top_{q_u}(m_{uu} \dot{q}_u)\dot q_u  +\nabla_{q_u} (m_{au}^\top \dot{q}_a)\dot q_u - \frac 12 \nabla^\top_{q_u}(m_{au}^\top \dot{q}_a)\dot q_u     - \frac 12 \nabla^\top_{q_u} (m_{au} \dot{q}_u) \dot q_a  \\ \nabla_{q_u}( m_{au} \dot{q}_u) \dot q_u     \endarr \right] \nonumber \\
 &=&\left[ \begarr{c}  \nabla_{q_u}(m_{uu} \dot{q}_u) \dot q_u   - \frac 12 \nabla^\top_{q_u}(m_{uu} \dot{q}_u) \dot q_u  +\nabla_{q_u} (m_{au}^\top \dot{q}_a)\dot q_u - \nabla^\top_{q_u} (m_{au} \dot{q}_u) \dot q_a  \\ \nabla_{q_u}( m_{au} \dot{q}_u) \dot q_u     \endarr \right] \nonumber  \\
 &=& \left[ \begarr{c}  C_{mu}(q_u, \dot q _u) \dot q_u   + D_{mu}(q_u, \dot q)   \\ \nabla_{q_u}( m_{au} \dot{q}_u) \dot q_u   \endarr \right], 
\label{Corio}
\end{eqnarray*}
with 
\begin{eqnarray} 
C_{mu}(q_u, \dot q_u)&:=& \nabla_{q_u}(m_{uu} \dot{q}_u)    - \frac 12 \nabla^\top_{q_u}(m_{uu} \dot{q}_u)  \label{Cmuu}  \\
D_{mu}(q_u, \dot q)&:=& \nabla_{q_u} (m_{au}^\top \dot{q}_a)\dot q_u - \nabla^\top_{q_u} (m_{au} \dot{q}_u) \dot q_a, 
\label{Dmu} 
\end{eqnarray}
where we have used the fact that
\begequ
\lab{ide}
\nabla_{q_u}^\top (m_{au}^\top \dot{q}_a) \dot{q}_u = \nabla_{q_u}^\top (m_{au} \dot{q}_u) \dot{q}_a.
\endequ
Consequenty, invoking Assumption {\bf A4}, the system \eqref{lagr} with the inner--loop control \eqref{utau}  can be written as 
\begin{eqnarray}
m_{uu}\ddot q_u&=& -[m_{au}^\top \ddot q_a+C_{mu}\dot q_u +D_{mu}+\nabla V_u   ] \label{muuddqu} \\
 \ddot q_a &=&-m_{aa}^{-1}[m_{au} \ddot q_u+\nabla_{q_u} (m_{au} \dot q_u)\dot q_u - u]. \label{maaddqa}
\end{eqnarray}
Moreover,  replacing \eqref{maaddqa} in \eqref{muuddqu}, we get
\begin{eqnarray}
m_{uu}^s\ddot q_u&=&m_{au}^\top m_{aa}^{-1}[\nabla_{q_u} (m_{au} \dot q_u)\dot q_u- u]-[C_{mu}\dot q_u +D_{mu}+\nabla V_u   ]  \label{ddotqua}
\end{eqnarray}

We proceed now to prove \eqref{dotha}. The time derivative of the storage function \eqref{newstofun1} along the solution of \eqref{ddotqua} yields 
\begin{eqnarray*}
\quad \dot{ H}_u&=& \dot q_u^\top \Big[  m_{au}^\top m_{aa}^{-1}\Big(\nabla_{q_u} (m_{au} \dot q_u)\dot q_u -u \Big)-\Big(C_{mu}\dot q_u +D_{mu} \Big) 
    \Big] +\hal \dot q_u^\top \dot m_{uu}^s  \dot q_u   \nonumber \\
 &=&  u^\top  y_u + L(q,\dot q)
 \label{dotbHa}
\end{eqnarray*}
where we used \eqref{outlag} and defined the matrix  
\begin{equation}
\label{Sqdq}
L(q,\dot q):=\hal \dot q_u^\top \Big[   \dot m^s_{uu}  + 2 m_{au}^\top m_{aa}^{-1}\nabla_{q_u} (m_{au} \dot q_u) - 2 C_{mu} \Big] \dot q_u -\dot q_u^\top D_{mu}. 
\end{equation}
To complete the proof we will show that $L(q,\dot q)=0$. This is established using Assumption {\bf A3}, \eqref{ide}, the definition of $m_{uu}^s$ given in  \eqref{ma} and the following identities 
\begin{eqnarray*}
\nonumber
\dot q_u^\top D_{mu}& = & 0 \label{dqCdq} \\
\dot q_u^\top [ \dot m_{uu}  -2C_{mu} ] \dot q_u &=& 0 \nonumber \\
\dot q_u^\top \left[  \frac{d}{dt} (m_{au}^\top m_{aa}^{-1} m_{au})  - 2 m_{au}^\top m_{aa}^{-1}\nabla_{q_u} (m_{au} \dot q_u) \right] \dot q_u &=& 0. \nonumber
\end{eqnarray*}

To prove \eqref{dothu} we, first, notice that the  storage function \eqref{newstofun2} can be rewritten as follows 
 \begin{eqnarray}
 H_a &=& \hal \dot q^\top M_a \dot q  \pm H_u \nonumber \\
 &=& \hal \dot q^\top M \dot q -  H_u + V_u(q_u). 
  \label{Ha1}
 \end{eqnarray}
Hence, taking the time derivative of \eqref{Ha1} along the solutions of \eqref{lagr} with the inner--loop control \eqref{utau} yields 
\begin{eqnarray*}
\dot {\bar H}_a&=& \hal \dot q^\top \dot{M} \dot q + \dot q^\top  \left(  -C(q,\dot q) \dot q -\lef[{c} \nabla V_u \\ u  \rig] \right)   -\dot{ H}_u + \nabla V_u ^\top \; \dot q_u \nonumber \\ 
&=&\dot q_a^\top u - \dot{ H}_u \nonumber\\
&=& \dot q_a^\top u - y_u^\top u  \nonumber\\
&=& u^\top  y_a.
\end{eqnarray*}
\qed

%
  %
\section*{B. Proof of Proposition \ref{pro4}}
  
The proof follows very closely the proof of Proposition \ref{pro1} with the only difference of the  inclusion of $V_a(q_a)$ via Assumption {\bf A8}. 
Taking the time derivatives of the storage functions \eqref{newstofun1a} along the solutions of \eqref{ddotqua} yields

\begin{eqnarray*}
\quad \dot{ \bar H}_u&=& \dot q_u^\top \Big[  m_{au}^\top m_{aa}^{-1}[\nabla_{q_u} (m_{au} \dot q_u)\dot q_u+ \nabla V_a-\tau]-(C_{mu}\dot q_u +D_{mu} +\nabla V_u) 
    \Big] +\hal \dot q_u^\top \dot m_{uu}^s  \dot q_u   \nonumber \\
   && + \nabla V_u^\top \dot q_u - \dot V_0  \nonumber \\
 &=&  \tau^\top  y_u + L(q,\dot q) - \dot V_0   + \kappa^\top   y_u  \nonumber \\
 &=&  \tau^\top  y_u
 \label{dotbHa}
\end{eqnarray*}
where $L(q, \dot q)$ is given by \eqref{Sqdq}, the second identity is obtained recalling that $L(q,\dot q)=0$ and invoking  {\bf A8}, and the last one follows from \eqref{V0}. 

We now proceed to prove passivity of the operator $\tau \mapsto   y_a$. Similarly to \eqref{Ha1} the  storage function \eqref{newstofun2a} can be rewritten as 
 \begin{eqnarray*}
  \bar H_a &=& \hal \dot q M_a \dot q + V_a(q_a) + V_0(q_u) \pm \bar H_u 
   \nonumber \\
 &=& \hal \dot q M \dot q +V_a(q_u) +V_0(q_u) + \left( \hal \dot q_u m_{uu}^s \dot q_u + V_u(q_u) - V_0(q_u)\right) -\bar H_u \nonumber \\
 &=&  \hal \dot q M \dot q -   \bar H_u + V_a(q_a) + V_u(q_u), 
 \label{barHa1}
 \end{eqnarray*}
and computing its   time derivatives  along the solutions of \eqref{lagr}  we get 
\begin{eqnarray*}
\dot { \bar H}_a&=& \dot q^\top  \left(  -C(q,\dot q) \dot q -\lef[{c} \nabla V_u \\ \nabla V_a  \rig]  + \lef[{c} 0 \\ \tau  \rig] \right)  +\hal \dot q^\top \dot{M} \dot q -\dot{ \bar H}_u + \nabla^\top V_a \;  \dot q_a + \nabla V_u ^\top \; \dot q_u \nonumber \\ 
&=&\dot q_a^\top \tau - \dot{ \bar H}_u   - \dot q_u^\top \nabla V_u + \nabla^\top V_u \; \dot q_u - \dot q_a^\top  \nabla V_a  + \nabla^\top V_a \; \dot q_a \nonumber\\
&=& \dot q_a^\top \tau + \dot q_u^\top m_{au}^\top m_{aa}^{-1} \tau \nonumber\\
&=& \tau^\top  y_a.
\end{eqnarray*}
\qed

 %
\section*{C. Proof of Proposition \ref{pro5}}
%
It has been shown in Subsection \ref{subsec42} that, independently of Assumption {\bf A7}, the function $H_d(q,\dot q)$ given in \eqref{tilhd} verifies
$$
\dot H_d =- \|  y_d \|^2_{K_P},
$$
where $y_d$ is defined in \eqref{yd1}. Invoking La Salle's invariance principle \cite{KHA}, we can conclude that all bounded trajectories converge to the maximum invariant set contained in 
$$
\mathcal X:=\{ (q,\dot q) \in \rea^n \times \rea^n\;|\;  k_a \dot q_a + (k_a-k_u) m_{aa}^{-1} m_{au}(q_u) \dot q_u \equiv 0 \}.
$$
Now, from the PID controller \eqref{pidcon1} it is clear that $y_d \equiv 0$ implies $z_1=c_0$ and, consequently, $u=c_0$ where here, and throughout the rest of the proof, $c_0$ denotes a (generic) constant vector (of suitable dimensions).  

We now compute the time derivative of $ y_d$, which results as follows
\begequarrs
\dot{ y}_d &=& (k_a-k_u) m_{aa}^{-1} \nabla_{q_u}(m_{au} \dot q_u) \dot q_u + (k_a-k_u) m_{aa}^{-1} m_{au} \ddot q_u + k_a \ddot q_a 
\label{dotyd} \\
&=&  (k_a-k_u)  m_{aa}^{-1} u + k_u \ddot q_a, 
\endequarrs
where we have used \eqref{maaddqa} to get the second identity. From the equation above $\dot y_d=0$ and $u=c_0$ we conclude that, in $\mx$,  $\ddot{q}_a = c_0$ also. Since we are looking only at {\em bounded} trajectories this implies that $\ddot q_a=\dot q_a=0$ and $q_a=c_0$. Setting $\dot q_a=0$ and $y_d=0$ in \eqref{dotz1} yields $\dot{ V}_N(q_u)=0$ that, replaced in \eqref{dotvn} and invoking the {\em full rank} Assumption {\bf A9}, allows us to conclude that  $\dot q_u=0$, which implies that $q_u=c_0$. This proves that the only bounded trajectories living in the residual set $\mx$ are of the form $(q(t),\dot q(t))=(\bar q,0)$, with $\bar q$ a constant vector.

It only remains to prove that the only point $(\bar q,0)$, which is invariant to the dynamics, is with $\bar q=q^\star$. This follows from injectivity of $\nabla V_u(q_u)$ that, setting it equal to zero, ensures $\bar q_u=q_u^\star$ and the following chain of implications
\begequarrs
(q(t),\dot q(t))=(\bar q,0) & \Rightarrow & u=0 \\
                & \Rightarrow & \bar q_a=q_a^\star \\
                & \Rightarrow & \bar q = q^\star,
\endequarrs
where we have used  \eqref{maaddqa} in the first implication and  \eqref{uatequ} together with $\bar q_u=q_u^\star$ in the second one.

\qed

\end{document}